\def\bbbq{\mathbb Q}
\def\bbbc{\mathbb C}
\def\bbbf{\mathbb F}
\def\bbbz{\mathbb Z}
\def\bbbg{\mathbb G}
\def\bolda{\bm{a}}
\def\boldb{\bm{b}}
\def\det{{\rm det}}
\def\rank{{\rm rank}}
\def\v#1{{\bf #1}}
\newtheorem{theorem}[subsection]{Theorem}
\newtheorem{proposition}[subsection]{Proposition}
\newtheorem{definition}[subsection]{Definition}
\newtheorem{lemma}[subsection]{Lemma}
\newtheorem{remark}[subsection]{Remark}
\newtheorem{corollary}[subsection]{Corollary}
\begin{document}
\title[Duality relations for hypergeometric series]{Duality relations for hypergeometric series}

\author[Frits Beukers]{Frits Beukers}
\address{Utrecht University, Department of Mathematics, P.O. Box 80.010, 3508 TA Utrecht, Netherlands}
\email{f.beukers@uu.nl}
\urladdr{http://www.staff.science.uu.nl/{\textasciitilde}beuke106/}

\author[Fr\'ed\'eric Jouhet]{Fr\'ed\'eric Jouhet}
\address{Institut Camille Jordan, Universit\'e Claude Bernard Lyon 1,
69622 Villeurbanne Cedex, France}
\email{jouhet@math.univ-lyon1.fr}
\urladdr{http://math.univ-lyon1.fr/{\textasciitilde}jouhet/}

\thanks{}

\date{}

\subjclass[2010]{}

\keywords{hypergeometric series, basic hypergeometric series,
difference equations, difference modules, differential equations, D-modules.}


\begin{abstract}
We  explicitly give the relations between the hypergeometric solutions of the general
hypergeometric equation and their duals, as well as similar relations for q-hypergeometric
equations. They form a family of very general identities for hypergeometric series. Although
they were foreseen already by N. M. Bailey in the 1930's on analytic grounds, we give a purely
algebraic treatment based on general principles in general differential and difference modules.
\end{abstract}

\maketitle

\section{Introduction and notations}

Hypergeometric series in one variable were introduced by Euler and extensively studied
by Gauss~\cite[pp. 123, 207]{Ga1866} as solutions of the so-called hypergeometric differential
equation.
Later Thomae~\cite{Th1870} generalized the Gauss hypergeometric functions to higher order versions.
In this paper a one variable hypergeometric function of order $r$ depends on $2r$ complex parameters
$a_1,\ldots,a_r,b_1,\ldots,b_r$ where we take $b_r=1$. The
hypergeometric function is given by a power series in $z$ of the form
$$\sum_{n=0}^{\infty}{(a_1)_n\cdots(a_r)_n\over (b_1)_n\cdots(b_{r-1})_nn!}\ z^n.$$
Here $(x)_n$ stands for the Pochhammer symbol (or rising factorial) defined by $(x)_0:=1$ and
$$(x)_n:=x(x+1)\cdots(x+n-1),\quad n\ge1.$$
In order for the coefficients to exist we must assume that $b_i\not\in \bbbz_{\le0}$ for
all $i$. In that case the radius of convergence of the series is $1$, unless $a_i\in\bbbz_{\le0}$
for some $i$, in which case we have a polynomial. The corresponding function is denoted by
$${}_rF_{r-1}\!\left(\begin{matrix}a_1,\ldots,a_r\\
b_1,\ldots,b_{r-1}\end{matrix};z\right).$$
It satisfies the $r$-th order linear differential equation
\begin{equation}\label{eq:hypergeq}
(\theta+b_1-1)\cdots(\theta+b_r-1) f=z(\theta+a_1)\cdots(\theta+a_r)f,
\end{equation}
where $\theta=z{d\over dz}$. The singularities of this equation are given by $z=0,1,\infty$.
Around $z=0$ one can easily give a basis of solutions if one assumes that the $b_i$ (including $b_r=1$) are
distinct modulo $\bbbz$. They read
$$f_i(z):=z^{1-b_i}\,{}_rF_{r-1}\!\left(\begin{matrix}a_1+1-b_i,\ldots,a_r+1-b_i\\
b_1+1-b_i,\ldots,\vee,
\ldots,b_r+1-b_i\end{matrix};z\right),\qquad 1\leq i\leq r,$$
where $\vee$ denotes deletion of the term with index $i$.
Note that in this notation $f_r(z)$ is the hypergeometric function we started with.
For details concerning irreducibility, rigidity and monodromy of the equation we refer to the paper
\cite{BH89}.

Basic hypergeometric series, or q-hypergeometric series, have appeared in the eighteenth
century when Euler wrote the generating function for integer partitions as an infinite product.
However, the  systematic study of basic hypergeometric series and equations appeared quite later in Heine's paper~\cite{He}. We refer the reader to the classical books of  Gasper and Rahman~\cite{GR} and
Slater~\cite{Sl} for additional historical information.  The generalized basic hypergeometric equations
are quantizations (or $q$-analogues) of the previous generalized hypergeometric equations.

We recall some standard notation for basic (or q-) hypergeometric series
(see for instance~\cite{GR} for a
comprehensive study of their theory).
Let $q$ be a fixed complex parameter (the ``base'') with $0<|q|<1$.
The $q$-shifted factorial is defined for any complex
parameter $a$ and any non-negative integer $n$ by $(a;q)_0:=1$ and
$$
(a;q)_n:=(1-a)(1-aq)\cdots(1-aq^{n-1}),\qquad n\geq1.
$$

Further,  for a positive integer $r$ and complex  parameters $a_1,\dots,a_r$,  $b_1,\dots,b_{r}$ with $b_r=q$, recall the definition of the corresponding basic hypergeometric series,
$$
{}_r\phi_{r-1}\!\left[\begin{matrix}a_1,\dots,a_r\\
b_1,\dots,b_{r}\end{matrix};q,z\right]:=
\sum_{n=0}^\infty\frac{(a_1;q)_n\cdots(a_r;q)_n}{(b_1;q)_n\cdots(b_r;q)_n}z^n.
$$

Again, in order for the coefficients to exist we must assume that $b_i\not\in q^{\bbbz_{\le0}}$ for
all $i$. In that case the radius of convergence of the series is $1$, unless $a_i\in q^{\bbbz_{\le0}}$
for some $i$, in which case we have a polynomial. 
The $q$-hypergeometric series is a solution of the following $r$th order difference equation
\begin{equation}\label{eq:qhyperg}
(1-b_1\sigma_q/q)\cdots(1-b_{r}\sigma_q/q)f=z(1-a_1\sigma_q)\cdots(1-a_r\sigma_q)f,
\end{equation}
where $\sigma_q$ is the automorphism on the field of formal Laurent series $\bbbc((z))$ given by
$\sigma_q(f)(z):=f(qz)$. Note that replacing $a_i$ by $q^{a_i}$ and $b_i$ by $q^{b_i}$ for all $i$ and letting $q$ tends to $1$, the basic hypergeometric series tends to the previous hypergeometric series. Moreover, performing these replacements and dividing both sides of~\eqref{eq:qhyperg} by $(1-q^r)$ before letting $q$ tends to $1$, one gets back the hypergeometric differential equation~\eqref{eq:hypergeq}.

Suppose that no quotient of two different $b_i$'s (including $b_r=q$) is an integral power of $q$. Define for $i=1,\dots,r$ the
formal solution $z^{1-\beta_i}$ of $\sigma_q(f)=(q/b_i)f$. This means that $\beta_i$ can
be thought of as $\log(b_i)/\log(q)$.
It is then easy to show~\cite{Ja} that a basis of solutions of~\eqref{eq:qhyperg} reads
$$f_i(q;z):=z^{1-\beta_i}\,{}_r\phi_{r-1}\!\left[
\begin{matrix}qa_1/b_i,\ldots,qa_r/b_i\\qb_1/b_i,\ldots,\vee,
\ldots,qb_r/b_i\end{matrix};q,z\right],\qquad 1\leq i\leq r.$$

For the general theory of irreducibility and (strong) rigidity of the $q$-difference
equation~\eqref{eq:qhyperg}, we refer the reader to~\cite{Ro11, Ro14}.

In this paper we shall also consider the duals of the (basic) hypergeometric equations and
their solutions. For the differential case these solutions read
$$g_i(z):=z^{b_i-1}\,{}_rF_{r-1}\!\left(\begin{matrix}b_i-a_1,\ldots,b_i-a_r\\
b_i+1-b_1,\ldots,\vee,
\ldots,b_1+1-b_r\end{matrix};z\right),\qquad 1\leq i\leq r.$$

For the basic hypergeometric case, we introduce
$$g_i(q;z):=z^{\beta_i-1}\,{}_r\phi_{r-1}\!\left[
\begin{matrix}b_i/a_1,\dots,b_i/a_r\\qb_i/b_1,\dots,\lor,\dots,qb_i/b_r\end{matrix};q;\frac{a_1\dots a_rzq^{r-2}}{b_1\dots b_{r-1}}\right],\qquad 1\leq i\leq r.$$

The main goal of the present paper is to exhibit an explicit form of the duality relations
between the $f_i(z),g_i(z)$ and of the $q$-versions $f_i(q;z),g_i(q;z)$. Here is our first theorem.

\begin{theorem}\label{duality}
With the previous notations, set
$$
c_i:=\prod_{j=1\atop j\neq i}^r\frac{1}{b_j-b_i},
$$
for $i=1,\dots,r$, where $r\ge2$. Then we have for any $k,l=0,1,\ldots,r-1$,
$$\sum_{i=1}^rc_i\theta^k(f_i)(z)\,\theta^l(g_i)(z)=:M_{kl}\in H(z),$$
where $H$ is the field generated over $\bbbq$ by the $a_i,b_j$.

In particular we have
$M_{kl}=0$ if $k+l\le r-2$ and $M_{kl}={(-1)^{k}\over 1-z}$ if $k+l=r-1$.
\end{theorem}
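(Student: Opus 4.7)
I organize the identities into a single matrix statement. Let $F(z)=(\theta^{k}f_j(z))_{0\le k\le r-1,\,1\le j\le r}$ and $G(z)=(\theta^{l}g_j(z))_{0\le l\le r-1,\,1\le j\le r}$, and put $C=\mathrm{diag}(c_1,\dots,c_r)$. The theorem is then about the entries of $M(z):=F(z)\,C\,G(z)^T$, and the shape it predicts---zero strictly below the anti-diagonal, rational on and above it---is exactly what one expects for the matrix of a flat pairing between a differential module and its dual expressed in Frobenius bases.

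The first step is to identify $g_i$ as a solution of the formal adjoint of $L=\prod_j(\theta+b_j-1)-z\prod_j(\theta+a_j)$. Using the pairing $\int\cdot\,dz/z$, for which $\theta^{*}=-\theta$, and pushing factors of $z$ through each $\theta$, one obtains
\[
L^{*}=(-1)^r\bigl[\prod_j(\theta+1-b_j)-z\prod_j(\theta+1-a_j)\bigr],
\]
again a hypergeometric operator but with parameters $\tilde a_j=1-a_j$, $\tilde b_j=2-b_j$; its indicial exponents at $0$ are the $b_j-1$, matching the leading behaviour of $g_j$. Hence $G$ is a fundamental matrix of the dual system. Flatness of the canonical pairing between a differential module and its dual then forces $M(z)$ to be rational in $z$ with singular locus in $\{0,1,\infty\}$: if the hypergeometric equation is written as the first-order system $\theta F=A F$, a fundamental matrix of the adjoint system has the form $(F^{-1})^T S$ with $S$ constant, and up to the explicit normalization of the columns of $G$ (encoded by the weights $c_i$), this yields $M(z)\in\mathrm{Mat}_r(H(z))$.

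Once rationality is secured, the explicit value is read off from local expansions at $z=0$:
\[
\theta^k(f_i)(z)=(1-b_i)^k\,z^{1-b_i}(1+O(z)),\qquad \theta^l(g_i)(z)=(b_i-1)^l\,z^{b_i-1}(1+O(z)),
\]
so the leading value of $\theta^k(f_i)\theta^l(g_i)$ at $z=0$ is $(-1)^k(b_i-1)^{k+l}$, a polynomial in $b_i$ of degree $k+l$. Since $c_i$ is, up to the sign $(-1)^{r-1}$, the residue of $1/\prod_j(x-b_j)$ at $x=b_i$, expanding this rational function at $x=\infty$ yields the combinatorial identity
\[
\sum_{i=1}^r c_i\,b_i^{\,m}=\begin{cases}0,&0\le m\le r-2,\\(-1)^{r-1},&m=r-1.\end{cases}
\]
This produces $M_{kl}(0)=0$ for $k+l\le r-2$ and $M_{kl}(0)=\pm 1$ for $k+l=r-1$. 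The remaining task is to locate the simple pole at $z=1$---traced to the factor $(1-z)$ appearing in front of the top $\theta^r$-term of both $L$ and $L^{*}$---and to check the appropriate growth at $z=\infty$, where $L$ has exponents $-a_j$ and $L^{*}$ has exponents $a_j$, which cancel in the product. Combined with the rationality, these data pin $M_{kl}$ down to the stated value $(-1)^k/(1-z)$ on the anti-diagonal, and to $0$ below it.

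\textbf{Main obstacle.} The heart of the matter is the second step: transferring the abstract duality of differential modules into an explicit identity involving the $g_i$ and the Lagrange weights $c_i$ of the statement. The flatness of the canonical pairing gives rationality ``for free'', but the Lagrange-interpolation nature of the $c_i$ is what makes the column normalization of $G$ compatible with the naive dual-basis construction; it is this combinatorial identity, coupled with a careful local analysis at $z=1$, that carries most of the work. The $q$-analogue later in the paper will follow the same template, with $\theta$ and its adjoint replaced by the $q$-shift $\sigma_q$ and its adjoint.
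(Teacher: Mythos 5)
Your plan follows the same overall strategy as the paper (identify the dual operator $L^*$, use flatness of the canonical pairing between the hypergeometric D-module and its dual, compute the normalizing constants from the leading behaviour at $z=0$), but the two steps that carry all the weight are not actually carried out, and one of them is wrong as stated. The rationality step is the first problem. It is true that $(F^{-1})^{T}S$ is a fundamental matrix of the adjoint \emph{system} $\theta Y=-A^{T}Y$; but $G$ is the Wronskian matrix of solutions of the adjoint \emph{operator} $L^*$, i.e.\ a fundamental matrix of the companion system of $L^*$. These two systems agree only after a $z$-dependent gauge transformation, namely the matrix $\Psi=(\tilde A_{ij})$ of the flat bilinear form $\Omega(g,f)=\sum_{i,j}\tilde A_{ij}\,\theta^{i}(g)\theta^{j}(f)$, for which the correct identity is $G^{T}\Psi F=C$ with $C$ constant, hence $FC^{-1}G^{T}=\Psi^{-1}$. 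A constant column normalization of $G$ (your weights $c_i$) cannot absorb $\Psi$, so "flatness gives rationality for free" is not a proof: you must construct $\Psi$ explicitly (the paper's Theorem~\ref{theorem:dual} and the lemma following Corollary~\ref{corollary:omegafixed}), and you must verify that in the bases $(f_i),(g_i)$ the constant matrix $C$ is diagonal with $C_{ii}=\prod_{j\ne i}(b_j-b_i)=1/c_i$ (the paper's Proposition~\ref{dualpairinggauss}: off-diagonal entries vanish because $b_i-b_j\notin\bbbz$, and the diagonal entries are the derivative of the indicial polynomial). You correctly flag this as the "main obstacle" but offer no argument for it.

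The endgame also does not close. Your local analysis only computes $M_{kl}(0)$; a rational function with poles in $\{0,1,\infty\}$ that vanishes at $z=0$ need not vanish identically, and knowing $M_{kl}(0)=\pm1$ together with "a simple pole at $z=1$ and suitable growth at $\infty$" does not pin down $M_{kl}=(-1)^k/(1-z)$ without quantitative bounds you never establish (and the higher Taylor coefficients at $0$ are not controlled by the Lagrange identity $\sum_i c_i b_i^m=\delta_{m,r-1}(-1)^{r-1}$, which only handles the constant term). In the paper both conclusions are read off at once from $M=\Psi^{-1}$: $\Psi$ is anti-triangular with $\tilde A_{ij}=0$ for $i+j\ge r$ and $\tilde A_{ij}=\pm A_r=\pm(1-z)$ on the anti-diagonal, so $\Psi^{-1}$ has zeros for $k+l\le r-2$ and $\pm1/(1-z)$ for $k+l=r-1$. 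Your constant-term computation is essentially the paper's evaluation of $C_{ii}$ deployed in the wrong place. Finally, note that carrying out your own expansion gives $(1-b_i)^k(b_i-1)^l=(-1)^{l}(1-b_i)^{k+l}$ and hence the sign $(-1)^{l}=(-1)^{k+r-1}$ on the anti-diagonal; leaving the answer as "$\pm1$" does not prove the stated sign, and this is exactly the kind of bookkeeping that only the explicit matrix $\Psi$ settles.
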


As an example, the case $k=l=0,r=2$ yields
\begin{multline*}
{}_2F_1\left(\begin{matrix}a_1,a_2\\
b_1\end{matrix};z\right)
{}_2F_1\left(\begin{matrix}1-a_1,1-a_2\\
2-b_1\end{matrix};z\right)\\
=
{}_2F_1\left(\begin{matrix}a_1+1-b_1,a_2+1-b_1\\
2-b_1\end{matrix};z\right)
{}_2F_1\left(\begin{matrix}b_1-a_1,b_1-a_2\\
b_1\end{matrix};z\right).
\end{multline*}
This follows directly from Euler's standard identity
$${}_2F_1\left(\begin{matrix}a,b\\
c\end{matrix};z\right)=
(1-z)^{c-a-b}
{}_2F_1\left(\begin{matrix}c-a,c-b\\
c\end{matrix};z\right).$$
The case $k=l=0,r=3$ can be found in the paper \cite{Da32} by Darling,
published in 1932.
Very soon after, Bailey~\cite{Ba33} in 1933 found another method to prove this identity
and its generalizations. It is based on explicit calculation of the coefficients
of the sum $\sum_ic_if_ig_i$, which are then shown to vanish by a contour integration argument.
The approach we take in this paper is entirely different.
It is purely algebraic and based on the general relations that
arise from differential hypergeometric modules and their dual modules.

To give an idea of a complete set of relations we reproduce the matrix $M:=(M_{kl})_{0\leq k,l\leq r-1}$
for the case $r=2$ here,
$$M=\begin{pmatrix} 0 & {1\over1-z}\\
-{1\over 1-z} & {-2+b_1+b_2+(1-a_1-a_2)z\over (1-z)^2}
\end{pmatrix}
$$
and for the case $r=3$,
$$M=\begin{pmatrix} 0 & 0 & {1\over1-z}\\ 0 & -{1\over1-z} &
{3-e_1(\boldb)+(-2+e_1(\bolda))z\over (1-z)^2}\\
{1\over1-z} &  {-3+e_1(\boldb)+(1-e_1(\bolda))z\over (1-z)^2}& {A+Bz+Cz^2\over(1-z)^3}\\
\end{pmatrix},$$
where
\begin{eqnarray*}
A&=&(e_1(\boldb)-1)^2-e_2(\boldb)+2,\\
B&=&e_2(\boldb)-2(e_1(\bolda)-1/2)(e_1(\boldb)-5/2)+e_2(\bolda)-5/2,\\
C&=&(e_1(\bolda)-1)^2-e_2(\bolda),
\end{eqnarray*}
and
$$e_1(\v x)=x_1+x_2+x_3,\quad e_2(\v x)=x_1x_2+x_1x_3+x_2x_3$$
are elementary symmetric functions.

Such matrices can be computed with the formula $M=\Psi^{-1}$
coming from equation~\eqref{matrixduality}. 
There is an analogous result in the q-hypergeometric case.

\begin{theorem}\label{qduality}
With the previous notations, set
$$
c_i(q):=qb_i^{r-2}\prod_{j=1\atop j\neq i}^r\frac{1}{b_i-b_j},
$$
for $i=1,\dots,r$ where $r\ge2$. Then we have for any $k,l=0,1,\ldots,r-1$,
$$\sum_{i=1}^rc_i(q)\,f_i(q;zq^k)\,g_i(q;zq^l)=:M_{kl}(q)\in H_q(z),$$
where $H_q$ is the field generated by $q,a_i,b_j$ over $\bbbq$.

In particular we have $M_{kl}(q)=0$ if $l\le k\le r-1$, except when $(k,l)=(r-1,0)$. In the latter
case,
$$M_{r-1,0}(q)=\frac{(-1)^{r+1}q^r}{b_1\dots b_{r}-a_1\dots a_rq^{r-1}z}.$$
In addition we have
$$M_{k,k+1}(q)={1\over 1-q^kz},\quad k=0,1,\ldots,r-2.$$
\end{theorem}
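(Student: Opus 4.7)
The strategy is to transport the module-theoretic framework of Theorem~\ref{duality} to the q-difference setting. Let $K := H_q(z)$, view~\eqref{eq:qhyperg} as defining a q-difference module $\mathcal{M}_q$ of rank $r$ over $K$, and use $f$ as a cyclic generator to rewrite the equation as a first-order matrix system $\sigma_q F = A(z) F$, where $F = (f, \sigma_q f, \dots, \sigma_q^{r-1} f)^T$ and $A(z)$ is the companion matrix built from~\eqref{eq:qhyperg}. The fundamental solution matrix $\Psi_q(z)$ has $(k,i)$-entry $\sigma_q^k f_i(q;z) = f_i(q; q^k z)$; analogously, the $g_i$ give a fundamental matrix $\Phi_q(z)$ with $(l,i)$-entry $g_i(q; q^l z)$ for the dual q-difference module $\mathcal{M}_q^*$.

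The first main step is to verify that the $g_i(q;z)$ really solve $\mathcal{M}_q^*$. A direct computation shows that the dual of the q-hypergeometric operator in~\eqref{eq:qhyperg} is again q-hypergeometric, with parameters $a_j \mapsto b_i/a_j$ and $b_j \mapsto q b_i/b_j$ and argument rescaled by $a_1 \cdots a_r q^{r-2}/(b_1\cdots b_{r-1})$, exactly matching the definition of $g_i(q;z)$. The canonical pairing $\mathcal{M}_q \otimes \mathcal{M}_q^* \to K$ is a morphism of q-difference modules to the trivial module, and evaluating it on the bases yields a matrix identity
$$\Psi_q(z) \cdot \mathrm{diag}(c_i(q)) \cdot \Phi_q(z)^T = M(q),$$
whose $(k,l)$-entry is precisely the stated sum. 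Rationality of $M(q) \in K^{r\times r}$ is then immediate, since the pairing lands in $K$.

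To identify explicit entries, use that modulo the diagonal and gauge normalization, $M(q)$ is essentially the inverse $\Psi_q(z)^{-1}$. In the companion presentation, $\Psi_q^{-1}$ has a very rigid shape: its first $r-1$ rows each contain a single non-zero entry located on the super-diagonal, and only its last row has a full complement of entries given by the recursion coefficients of~\eqref{eq:qhyperg}. This immediately yields $M_{kl}(q) = 0$ for $l \le k$ with $(k,l) \neq (r-1, 0)$, together with the super-diagonal values $M_{k,k+1}(q) = 1/(1-q^k z)$, which come from the constant term $(1-z)$ of the q-hypergeometric operator shifted by $\sigma_q^k$. The entry $M_{r-1,0}(q)$ is governed by the leading coefficient of the q-difference operator (a rational multiple of $b_1\cdots b_r - a_1\cdots a_r q^{r-1} z$), producing the stated formula with sign $(-1)^{r+1}$ coming from the alternation in the companion inverse.

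The principal obstacle is the careful q-gauge bookkeeping needed to make the pairing match the sum as written. The normalization $c_i(q) = q b_i^{r-2} \prod_{j\ne i}(b_i-b_j)^{-1}$ must combine a Vandermonde-type residue (analogous to $c_i$ in Theorem~\ref{duality}) with a compensating q-twist $q b_i^{r-2}$ that absorbs the argument rescaling hidden in the definition of $g_i(q;z)$; obtaining the precise numerator $(-1)^{r+1} q^r$ in $M_{r-1,0}(q)$ demands tracking these factors all the way through the duality. By contrast, the vanishing pattern $l \le k$ in the q-case (as opposed to the differential pattern $k+l \le r-2$) reflects the genuinely discrete character of $\sigma_q$ versus the infinitesimal $\theta$, and follows cleanly from the companion structure of $\Psi_q^{-1}$ once that is in place, without any analog of the differential Wronskian argument.
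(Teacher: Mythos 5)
Your overall strategy coincides with the paper's: realize the duality as a $\sigma_q$-invariant pairing between the q-hypergeometric difference module and its dual, check that the dual operator is again q-hypergeometric with parameters $b_i/a_j$, $qb_i/b_j$ and rescaled argument, write the pairing on the two solution bases as a matrix identity $\bbbg^t\Psi_q\bbbf=C$, and read the theorem off from $\bbbf C^{-1}\bbbg^t=\Psi_q^{-1}$. However, two steps are genuinely defective rather than merely compressed. First, your description of the shape of $\Psi_q^{-1}$ is wrong and is in fact inconsistent with the conclusions you draw from it. For the paper's (non-obvious) choice of dual operator $L^*$, the pairing matrix $\Psi_q$ has first row $(0,\dots,0,-\Delta^{-1}(A_r))$ and last column zero below the first entry; its inverse therefore has its first $r-1$ rows \emph{strictly upper triangular} but in general with several non-zero entries to the right of the superdiagonal (already for $r=3$ the $(0,2)$ entry is $(e_1(\boldb)-e_1(\bolda)qz)/((1-z)(1-qz))\neq0$), while the \emph{last} row is $(-\Delta^{-1}(1/A_r),0,\dots,0)$, i.e.\ it is the last row that has a single non-zero entry. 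You assert the opposite: if the last row were ``full'' as you claim, then $M_{r-1,l}$ would be non-zero for $l\geq1$, contradicting the very vanishing $M_{kl}=0$ for $l\leq k$ you purport to deduce; and if the first rows had a single superdiagonal entry, $M_{0,2}$ would vanish, contradicting the paper's $r=3$ example. The correct entries $M_{k,k+1}=\Delta^k(1/A_0)=1/(1-q^kz)$ and $M_{r-1,0}=-\Delta^{-1}(1/A_r)=(-1)^{r+1}q^r/(b_1\cdots b_r-a_1\cdots a_rq^{r-1}z)$ do come out, but only from the correct shape, which you have not established.

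Second, you assert that evaluating the pairing on the bases $(f_i)$, $(g_i)$ yields the diagonal matrix $\mathrm{diag}(c_i(q))$ --- equivalently that $C$ is diagonal with $C_{ii}=1/c_i(q)$ --- and you yourself flag the normalization as ``the principal obstacle,'' but you never carry it out. This is where the paper does real work: the off-diagonal entries $C_{ij}$ are $\sigma_q$-constants which are simultaneously of the form $z^{\beta_j-\beta_i}$ times a power series, hence vanish because no ratio $b_i/b_j$ is an integral power of $q$; the diagonal entry is then computed from constant terms and equals $(q/b_i)^{r-1}$ times essentially the derivative of $\prod_j(x-b_j/q)$ at $x=b_i/q$, giving $C_{ii}=q^{-1}b_i^{2-r}\prod_{j\neq i}(b_i-b_j)$. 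Without this argument the constants $c_i(q)$, and with them the identity as stated, are not obtained. Both gaps are repairable within your framework, but as written the proof does not go through.
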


For $k=l=0,r=2$ we obtain the relation
\begin{multline*}
{}_2\phi_1\!\left[
\begin{matrix}a_1,a_2\\b_1\end{matrix};q,z\right]
{}_2\phi_1\!\left[
\begin{matrix}q/a_1,q/a_2\\q^2/b_1\end{matrix};q;\frac{a_1a_2z}{b_1}\right]\\
={}_2\phi_1\!\left[
\begin{matrix}qa_1/b_1,qa_2/b_1\\q^2/b_1\end{matrix};q,z\right]
{}_2\phi_1\!\left[
\begin{matrix}b_1/a_1,b_1/a_2\\b_1\end{matrix};q;\frac{a_1a_2z}{b_1}\right],
\end{multline*}

where we explicitly set $b_2=q$.
Note that this identity can be proved by applying Heine's transformation formula for a ${}_2\phi_1$ series~\cite[Appendix III, (III.3)]{GR}. As communicated to us by S.~O.~Warnaar,
extracting coefficients of $z^n$ in this identity yields a special case of Sears' transformation
formula for a terminating balanced ${}_4\phi_3$ series~\cite[Appendix III, (III.16)]{GR}.
In~\cite{Ba33}, Bailey explained how to prove Theorem~\ref{qduality}
for $k=l=0$ and $r=3$ by using contour integration techniques. Moreover, the case $k=r-2, l=0$ and general $r$ is both proved in Sears' paper~\cite{Se51} by recursion and  classical techniques of $q$-series, and in Shukla's article~\cite{Sh57} by use of previous work due to Slater on bilateral basic hypergeometric series.

We did not find in the literature identities from Theorem~\ref{qduality} in full generality.
As in the differential case, our proof is purely algebraic and based on relations that
exist between q-hypergeometric difference modules and their duals. 

To give an idea of a complete set of relations we reproduce the matrix $M_q:=(M_{kl}(q))_{0\leq k,l\leq r-1}$
for the case $r=2$ here,
$$M_q=\begin{pmatrix}0 & {1\over 1-z}\\ {-q^2\over b_1b_2-a_1a_2qz} & 0
\end{pmatrix},$$
and the case $r=3$,
$$M_q=\begin{pmatrix}0 & {1\over 1-z} & {e_1(\v b)-e_1(\v a)qz\over (1-z)(1-qz)}\\
0 & 0 & {1\over 1-qz}\\
{q^3\over b_1b_2b_3-a_1a_2a_3q^2z} & 0 & 0
\end{pmatrix}
$$

Such matrices can be computed with the formula $M_q=\Psi_q^{-1}$
coming from equation~\eqref{matrixqduality}.

This paper is organized as follows. In Section~\ref{Dmodules},
we will recall the notion of D-modules associated to general linear differential equations and their corresponding duals.
The proof of Theorem \ref{duality} will then be given in Section~\ref{hypergeometric},
as a consequence of these general considerations. In a similar vein we recall the notion of difference modules associated to
general linear difference equations and their duals in Section~\ref{deltamodules}.
Theorem \ref{qduality} will then be proven
in Section~\ref{qhypergeometric}.

\section{D-modules and duality}\label{Dmodules}

Let $K$ be a differential field with a derivation
$D$. Let $K_0$ be the field of constants.

\begin{definition}
A D-module $M$ over $K$ is a $K$-vector space with a map
$\nabla:M\to M$ such that
\begin{enumerate}
\item $\nabla(m_1+m_2)=\nabla(m_1)+\nabla(m_2)$ for all $m_1,m_2\in M$,
\item $\nabla(fm)=D(f)m+f\nabla(m)$ for all $f\in K$ and $m\in M$.
\end{enumerate}
\end{definition}

In what follows we shall abbreviate $\nabla$ by $D$ again.
The major example is the D-module associated to a linear differential operator
$L\in K[D]$. Consider the left ideal $(L)$ generated by $L$.
The action of $D$ from the left on the quotient ring $K[D]/(L)$ turns it
into a D-module.

\begin{definition}\label{}
Let $M,M'$ be $D$-modules over $K$.

A $K$-linear map $\phi:M\to M'$ is called a $D$-homomorphism if
$D\circ \phi=\phi\circ D$. If $\phi$ is a $K$-vector space isomorphism
we call $\phi$ a $D$-isomorphism.
\smallskip

A $D$-module $M$ is called irreducible if $\{\v 0\}$ and $M$ are the
only D-submodules.
\smallskip

The tensor product $M\otimes M'$ can be given a $D$-module structure
via
$$D(m\otimes m')=D(m)\otimes m'+m\otimes D(m'),$$
for all $m\in M,m'\in M'$.

The dual $M^*$ of the vectorspace $M$ can be given a $D$-module structure
via
$$\langle D(m^*),m\rangle=D(\langle m^*,m\rangle)-\langle m^*,D(m)\rangle$$
for every $m^*\in M^*$ and $m\in M$. Here $\langle m^*,m\rangle$ denotes the evaluation
of $m^*$ at the point $m$.
\end{definition}

\begin{proposition}\label{explicitdual}
Let $M$ be a finite dimensional D-module over $K$. Let $m_1,\ldots,m_r$ be a
$K$-basis and let $A_{ij}\in K$ be such that
$$D(m_i)=\sum_{j=1}^rA_{ij}m_j$$
for all $i$. Define the dual basis $m^*_i$ by $\langle m^*_i,m_j\rangle=\delta_{ij}$ for
$i,j=1,\ldots,r$, where $\delta_{ij}$ is the Kronecker delta. Then,
\begin{enumerate}
\item $D(m^*_i)=-\sum_{j=1}^{r}A_{ji}m^*_j$ for $i=1,\ldots,r$,
\item $\omega=\sum_{i=1}^{r}m^*_i\otimes m_i\in M^*\otimes M$ does not depend
on the choice of basis $m_i$ and $D(\omega)=0$.
\end{enumerate}
\end{proposition}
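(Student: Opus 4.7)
The plan is to unwind the definitions of the dual $D$-module structure and the tensor product $D$-action, then verify both claims by direct computation. Both parts are essentially bookkeeping with indices, so the only real care needed is tracking which matrix (or its transpose) appears where.

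For part (1), the approach is to test $D(m_i^*)$ against each basis vector $m_j$. From the defining relation
$$\langle D(m_i^*), m_j\rangle = D(\langle m_i^*, m_j\rangle) - \langle m_i^*, D(m_j)\rangle,$$
the first term vanishes because $\langle m_i^*, m_j\rangle = \delta_{ij}$ is a constant in $K_0$, and the second term evaluates to $\langle m_i^*, \sum_k A_{jk} m_k\rangle = A_{ji}$. Writing $D(m_i^*) = \sum_k c_{ik} m_k^*$, pairing with $m_j$ gives $c_{ij}$. Equating $c_{ij} = -A_{ji}$ yields the claimed formula.

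For the basis-independence in part (2), I would take a second basis $n_k = \sum_i P_{ki} m_i$ with invertible matrix $P$, and determine the dual basis by imposing $\langle n_k^*, n_l\rangle = \delta_{kl}$: if $n_k^* = \sum_j Q_{kj} m_j^*$, then $\sum_j Q_{kj} P_{lj} = \delta_{kl}$, i.e. $Q = (P^{T})^{-1}$. Substituting into $\sum_k n_k^* \otimes n_k$ gives
$$\sum_k n_k^*\otimes n_k = \sum_{i,j}\Bigl(\sum_k Q_{kj}P_{ki}\Bigr) m_j^*\otimes m_i = \sum_{i,j}(P^{T}Q)_{ij}\, m_j^*\otimes m_i = \sum_i m_i^*\otimes m_i,$$
since $P^{T}Q = I$. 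Hence $\omega$ is intrinsic.

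Finally, for $D(\omega) = 0$, I would apply the Leibniz rule for the tensor product $D$-module structure together with part (1):
$$D(\omega) = \sum_i D(m_i^*)\otimes m_i + \sum_i m_i^*\otimes D(m_i) = -\sum_{i,j} A_{ji}\, m_j^*\otimes m_i + \sum_{i,j} A_{ij}\, m_i^*\otimes m_j.$$
Swapping the labels $i\leftrightarrow j$ in the first sum shows that the two double sums cancel. The only point to watch is the sign in part (1) and the transpose in the change-of-basis argument; no step poses a genuine obstacle.
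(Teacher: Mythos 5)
Your proof is correct and follows essentially the same route as the paper: part (1) is the same dual-pairing computation, and the "straightforward computation" the paper omits for part (2) is exactly the change-of-basis and Leibniz-cancellation argument you write out. No gaps.
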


\begin{proof}
To see the first assertion, evaluate the form $m_k^*$ on the equality
$D(m_i)=\sum_{j=1}^rA_{ij}m_j$. We get
$$\langle m_k^*,D(m_i)\rangle=\sum_{j=1}^r A_{ij}\langle m_k^*,m_j\rangle.$$
By definition of $D(m_k^*)$ and $\langle m_k^*,m_j\rangle=\delta_{jk}$ we get
$$-\langle D(m_k^*),m_i\rangle=A_{ik}.$$
From this it follows that $D(m_k^*)=-\sum_{i=1}^rA_{ik}m_i^*$.

The second assertion now follows by straightforward computation.
\end{proof}

In the following proposition we consider elements $\Omega\in N\otimes M$
where $N,M$ are D-modules of the same rank $r$. We say that $\Omega$ is
 non-degenerate if it cannot be written in the form $\sum_{i=1}^sn_i\otimes m_i$
with $m_i\in M,n_i\in N$ and $s<r$.

\begin{proposition}\label{correspondence}
Let $M,N$ be finite dimensional D-modules over $K$ of rank $r$.
Let $m_1,\ldots,m_r$ be a basis of $M$ and $m_1^*,\ldots,m_r^*$ the corresponding
dual basis of $M^*$.

Then the D-homomorphisms $\phi:M^*\to N$ are in one-to-one correspondence
with tensors $\Omega\in N\otimes M$ such that $D(\Omega)=0$.

The correspondence is given by $\phi$ goes to $\Omega=\sum_{i=1}^r
\phi(m_i^*)\otimes m_i$.

Conversely, the tensor $\Omega=\sum_{i=1}^rn_i\otimes m_i$ with
$D(\Omega)=0$ corresponds to the $K$-linear map generated by
$m_i^*\to n_i$ $(i=1,\ldots,r)$, which is a D-homomorphism.

Finally, $\phi$ is an isomorphism if and only if $\Omega$ is
non-degenerate.
\end{proposition}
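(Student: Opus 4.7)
The plan is to organise the argument around the canonical $D$-invariant element $\omega_M=\sum_{i=1}^r m_i^*\otimes m_i\in M^*\otimes M$ supplied by Proposition~\ref{explicitdual}. Given a $D$-homomorphism $\phi:M^*\to N$, I would first observe that the induced $K$-linear map $\phi\otimes\mathrm{id}_M:M^*\otimes M\to N\otimes M$ is itself a $D$-homomorphism (immediate from the Leibniz rule defining $D$ on tensor products). Applying it to $\omega_M$ yields
$$\Omega:=(\phi\otimes\mathrm{id}_M)(\omega_M)=\sum_{i=1}^r\phi(m_i^*)\otimes m_i,$$
which automatically satisfies $D(\Omega)=0$, giving one direction of the correspondence.

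For the converse, I would expand an arbitrary $\Omega\in N\otimes M$ uniquely as $\sum_{i=1}^r n_i\otimes m_i$ (uniqueness holds because $m_1,\ldots,m_r$ is a $K$-basis of $M$), set $D(m_i)=\sum_j A_{ij}m_j$, and compute $D(\Omega)$ by Leibniz. Comparing coefficients of $m_j$ in $D(\Omega)=0$ then forces
$$D(n_j)=-\sum_{i=1}^r A_{ij}n_i,\qquad j=1,\ldots,r.$$
Define $\phi:M^*\to N$ as the $K$-linear extension of $m_i^*\mapsto n_i$. To verify $\phi$ is a $D$-homomorphism it suffices, by $K$-linearity and Leibniz, to check $\phi(D(m_i^*))=D(\phi(m_i^*))$ on the dual basis. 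Using Proposition~\ref{explicitdual}(1) the left side equals $-\sum_j A_{ji}n_j$, which coincides with $D(n_i)$ by the system just obtained. The two constructions are manifestly inverse to one another, so the bijection is established.

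For the last assertion I would argue that $\phi$ is a $K$-vector space isomorphism iff $n_1=\phi(m_1^*),\ldots,n_r=\phi(m_r^*)$ are $K$-linearly independent in $N$, which by $\dim_K N=r$ is equivalent to their forming a basis. On the other hand, once the basis $m_1,\ldots,m_r$ of $M$ is fixed, the uniqueness of the expansion $\Omega=\sum n_i\otimes m_i$ makes the tensor rank of $\Omega$ equal to $\dim_K\mathrm{Span}(n_1,\ldots,n_r)$; hence non-degeneracy of $\Omega$ is precisely the independence of the $n_i$, matching the bijectivity of $\phi$.

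I do not anticipate a serious obstacle here; the only point that requires a moment of care is to justify that the intrinsic notion of non-degeneracy (minimal length of a pure-tensor decomposition, taken over all choices of factors) really does coincide with the $K$-linear independence of the coefficients $n_i$ appearing when $\Omega$ is expanded along the fixed basis of $M$. Once this is noted, the equivalence with isomorphism of $\phi$ is automatic.
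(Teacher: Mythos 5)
Your proof is correct and follows essentially the same route as the paper: both directions reduce, via Proposition~\ref{explicitdual}, to the identity $D(n_j)+\sum_i A_{ij}n_i=0$, and non-degeneracy is matched with linear independence of the $n_i$ in the same way. The only cosmetic difference is that you obtain the forward direction by pushing the invariant tensor $\omega_M$ through $\phi\otimes\mathrm{id}_M$, whereas the paper verifies the equivalence of the two conditions by one direct computation; the content is the same.
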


\begin{proof}
Suppose we are given a $K$-linear map $\phi:M^*\to N$.
It is determined by its values $n_i:=\phi(m_i^*)$
for $i=1,\ldots,r$. We write the D-homomorphism condition in two ways.

First of all, by definition one should have $D(n_i)=D(\phi(m_i^*))=
\phi(D(m_i^*))$ for all $i$. Using Proposition \ref{explicitdual} this is the
same as
$$D(n_i)=\phi\left(-\sum_{j=1}^rA_{ji}m_j^*\right)=-\sum_{j=1}^rA_{ji}n_j, \quad i=1,\ldots,r.$$
Now define $\Omega=\sum_{i=1}^r n_i\otimes m_i$ and subsitute it into $D(\Omega)=0$.
We get
$$D(\Omega)=\sum_{i=1}^r D(n_i)\otimes m_i+n_i\otimes D(m_i)=0.$$
Using $D(m_i)=\sum_{j=1}^rA_{ij}m_j$ rewrite this as
$$\sum_{j=1}^r \left(D(n_j)+\sum_{i=1}^rA_{ij}n_i\right)\otimes m_j=0.$$
This is equivalent to $D(n_j)+\sum_{i=1}^rA_{ij}n_i=0$ for $j=1,\ldots,r$.
Here we recognize our formulation of the D-homomorphism condition.

Note that the tensor $\sum_{i=1}^rn_i\otimes m_i$ is non-degenerate if and
only if $n_1,\ldots,n_r$ are linearly independent. But this is equivalent to
$\phi$ being an isomorphism.
\end{proof}

The next result identifies (up to D-isomorphism) the dual differential operator
associated with any fixed differential operator $L\in K[D]$.

\begin{theorem}\label{theorem:dual}
Consider the $r$-th order differential operator
$$ L=A_rD^r+A_{r-1}D^{r-1}+\cdots+A_1D+A_0,$$
with $A_i\in K$ for all $i$. Then the dual of $K[D]/(L)$ is D-isomorphic
to $K[D]/(L^*)$ where
$$L^*=(-D)^r\circ A_r+(-D)^{r-1}\circ A_{r-1}+\cdots+(-D)\circ A_1+A_0.$$
\end{theorem}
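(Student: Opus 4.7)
The plan is to apply Proposition~\ref{correspondence} with $M:=K[D]/(L)$ and $N:=K[D]/(L^*)$: instead of constructing a $D$-isomorphism $M^*\to N$ directly, it suffices to produce a non-degenerate horizontal tensor $\Omega\in N\otimes M$, i.e.\ one with $D(\Omega)=0$. Fix the standard $K$-basis $e_i:=D^i\bmod L$ of $M$, so that $D(e_i)=e_{i+1}$ for $i<r-1$ and $D(e_{r-1})=-A_r^{-1}\sum_{j=0}^{r-1}A_je_j$, and write $\bar 1\in N$ for the image of $1\in K[D]$.

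Writing $\Omega=\sum_{i=0}^{r-1}v_i\otimes e_i$ with unknowns $v_i\in N$, the condition $D(\Omega)=0$ unfolds via the Leibniz rule into the system
\begin{equation*}
D(v_0)=\frac{A_0}{A_r}v_{r-1},\qquad D(v_i)+v_{i-1}=\frac{A_i}{A_r}v_{r-1}\quad(1\le i\le r-1).
\end{equation*}
The equations with $i\ge1$ form a back-recursion that determines $v_{r-2},\ldots,v_0$ from $v_{r-1}$ in closed form
\begin{equation*}
v_{r-1-k}=(-1)^k D^k(v_{r-1})+\sum_{l=0}^{k-1}(-1)^l D^l\!\left(\frac{A_{r-k+l}}{A_r}v_{r-1}\right).
\end{equation*}

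The conceptual heart of the argument, and its main obstacle, lies in the remaining single constraint $D(v_0)=A_0A_r^{-1}v_{r-1}$. I will choose $v_{r-1}:=A_r\bar 1$; substituting the closed form for $v_0$ and re-indexing, this constraint collapses to
\begin{equation*}
\sum_{m=0}^r(-1)^m D^m(A_m\bar 1)=0,
\end{equation*}
which is exactly $L^*(\bar 1)=0$ in $N=K[D]/(L^*)$ and therefore holds tautologically by definition of the quotient. Recognising the closing relation as the defining relation of $N$ is the one non-routine step; once seen, everything on either side of it is bookkeeping.

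To finish, one must verify that the resulting tensor is non-degenerate, i.e.\ that $v_0,\ldots,v_{r-1}$ are $K$-linearly independent in $N$. Expanding via the Leibniz rule, the closed form above shows that $v_{r-1-k}$ lies in the span of $\bar 1,D\bar 1,\ldots,D^k\bar 1$ with leading coefficient $(-1)^kA_r\ne 0$ on $D^k\bar 1$. The change-of-basis matrix is therefore triangular with nonzero diagonal, hence invertible, and Proposition~\ref{correspondence} then delivers the required $D$-isomorphism $M^*\cong N=K[D]/(L^*)$.
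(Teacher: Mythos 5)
Your proposal is correct and takes essentially the same route as the paper: both invoke Proposition~\ref{correspondence} and exhibit a horizontal non-degenerate tensor in $N\otimes M$, and in fact your $v_i$ coincide exactly with the coefficients of the paper's $\Omega=\sum_{k}[A_k,1]_{k-1}$. The only differences are presentational: you derive the tensor by solving the recursion coming from $D(\Omega)=0$ rather than verifying a guessed formula via the telescoping identity for $[u,v]_m$, and your triangularity argument for non-degeneracy is self-contained where the paper defers to the invertibility of the matrix $\Psi$ introduced afterwards.
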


\begin{proof}
According to Proposition \ref{correspondence} we need to find an
element $\Omega\in (K[D]/(L^*))\otimes
(K[D]/(L))$ such that $D(\Omega)=0$. We propose to take
$$\Omega=\sum_{k=1}^n[A_k,1]_{k-1}$$
where we define
$$[u,v]_m=u\otimes(D^m\circ v)-(D\circ u)\otimes(D^{m-1}\circ v) +
(D^2\circ u)\otimes (D^{m-2}\circ v)+\cdots+(-1)^m (D^m\circ u)\otimes v.$$
Note that
$$D([u,v]_m)=u\otimes(D^{m+1}\circ v)+(-1)^m (D^{m+1}\circ u)\otimes v.$$
Using this property we can show that $D(\Omega)=0$. The fact that $\Omega$
is non-degenerate follows from the invertibility of the matrix $\Psi$ defined
below.
\end{proof}

We get the following immediate consequence.
\begin{corollary}\label{corollary:omegafixed}
Let ${\mathcal K}$ be a differential extension of $K$ and suppose that
$f,g\in {\mathcal K}$ satisfy the equations $L(f)=0$ and $L^*(g)=0$, where $L$ and $L^*$ are defined in Theorem~\ref{theorem:dual}. Then
the element
\begin{eqnarray*}
\Omega(g,f)&:=&A_0gf\\
&&+(A_1g)D(f)-D(A_1g)f\\
&&+(A_2g)D^2(f)-D(A_2g)D(f)+D^2(A_2g)f\\
&&\vdots\\
&&+(A_ng)D^{n-1}(f)-D(A_ng)D^{n-2}(f)+\cdots+(-1)^{n-1}D^{n-1}(A_ng)f\\
\end{eqnarray*}
belongs to $K_0$, the subfield of constant elements of ${\mathcal K}$ under $D$.
\end{corollary}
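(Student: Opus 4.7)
The plan is to realize $\Omega(g,f)$ as the image of the tensor $\Omega \in (K[D]/(L^*)) \otimes_K (K[D]/(L))$ constructed in the proof of Theorem~\ref{theorem:dual} under a natural ``evaluate and multiply'' map, and then deduce $D(\Omega(g,f))=0$ from the identity $D(\Omega)=0$ proved there, using D-linearity.

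First I would set up the evaluation maps. Since $L(f)=0$, the $K$-linear map $\mathrm{ev}_f : K[D]/(L) \to \mathcal{K}$ defined by $[P] \mapsto P(f)$ is well-defined. It is a D-homomorphism because $D$ acts on $K[D]/(L)$ by left multiplication, so that $\mathrm{ev}_f(D\cdot [P]) = (DP)(f) = D(P(f))$. Analogously, $\mathrm{ev}_g : K[D]/(L^*) \to \mathcal{K}$, $[Q] \mapsto Q(g)$, is a D-homomorphism thanks to $L^*(g)=0$. I would then combine them into a single D-homomorphism
$$\Phi := \mu \circ (\mathrm{ev}_g \otimes \mathrm{ev}_f) : (K[D]/(L^*)) \otimes_K (K[D]/(L)) \longrightarrow \mathcal{K},$$
where $\mathrm{ev}_g \otimes \mathrm{ev}_f$ is D-linear by the Leibniz action on tensor products of D-modules, and the multiplication $\mu: \mathcal{K} \otimes_K \mathcal{K} \to \mathcal{K}$ is D-linear by the ordinary Leibniz rule $D(ab)=D(a)b+aD(b)$ on $\mathcal{K}$.

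Next I would apply $\Phi$ to the tensor $\Omega$ exhibited in the proof of Theorem~\ref{theorem:dual}. Since $D(\Omega)=0$ and $\Phi$ commutes with $D$, one obtains $D(\Phi(\Omega)) = \Phi(D(\Omega)) = 0$, so $\Phi(\Omega)$ lies in the subfield of constants of $\mathcal{K}$. The final step is to match $\Phi(\Omega)$ with the explicit expression for $\Omega(g,f)$ given in the statement: this is a matter of unwinding the definition of $[u,v]_m$, noting that applying $\mathrm{ev}_g$ to the class of the operator $D^j\circ A_k$ yields $D^j(A_k g)$, while applying $\mathrm{ev}_f$ to the class of $D^{m-j}$ yields $D^{m-j}(f)$, so that each bracket $[A_k,1]_{k-1}$ occurring in $\Omega$ is sent by $\Phi$ to the corresponding alternating-sum row in the displayed formula.

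The argument is essentially formal; the only obstacle worth flagging is the termwise bookkeeping in the last identification, which is routine once the compositions $D^j \circ A_k$ are carefully expanded. All of the conceptual work is already packaged into Theorem~\ref{theorem:dual} and the Leibniz rule.
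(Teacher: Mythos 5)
Your strategy is exactly the intended one: the paper offers no written proof of this corollary, and it is indeed ``immediate'' from Theorem~\ref{theorem:dual} precisely via the evaluation argument you describe. All three of your $D$-linearity claims are sound: $\mathrm{ev}_f$ and $\mathrm{ev}_g$ are well defined because $(L)$ and $(L^*)$ are \emph{left} ideals, so $(\mu L)(f)=\mu(L(f))=0$; they intertwine left multiplication by $D$ with the derivation on $\mathcal K$; and the multiplication map $\mathcal K\otimes_K\mathcal K\to\mathcal K$ commutes with the Leibniz action on the tensor product. Hence $D(\Phi(\Omega))=\Phi(D(\Omega))=0$ and $\Phi(\Omega)$ is a constant of $\mathcal K$.

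The one place where your ``routine bookkeeping'' is not actually routine is the final identification of $\Phi(\Omega)$ with the displayed $\Omega(g,f)$, and you should not wave it through: evaluating $\Omega=\sum_{k=1}^{r}[A_k,1]_{k-1}$ yields
$\sum_{k=1}^{r}\bigl((A_kg)D^{k-1}(f)-D(A_kg)D^{k-2}(f)+\cdots+(-1)^{k-1}D^{k-1}(A_kg)f\bigr)$,
whose first rows are $(A_1g)f$, then $(A_2g)D(f)-D(A_2g)f$, etc., with no $A_0gf$ term. The first three rows printed in the corollary instead follow the pattern of $[A_k,1]_{k}$ (only the last row matches $[A_n,1]_{n-1}$), and the alternative sum $\sum_{k}\mathrm{ev}([A_k,1]_k)$ is \emph{not} $D$-constant: already for $r=2$ its derivative equals $g\,L(D(f))$, which is generically nonzero since the solution space of $L$ is not closed under $D$. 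So the statement's display contains an index shift; your argument correctly proves constancy of $\Phi(\Omega)$, which is the version consistent with the last row of the display, with the matrix $\Psi$ (whose entries involve only $A_1,\ldots,A_r$), and with Proposition~\ref{dualpairinggauss}. Carrying out the expansion, rather than deferring it, is what would have exposed this discrepancy.
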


The following lemma can be proved by direct computation.

\begin{lemma}
Let notations be as above. Then $\Omega(g,f)=\sum_{i,j=0}^{r-1}\tilde{A}_{ij}D^i(g)D^j(f)$
where
$$\tilde{A}_{ij}=(-1)^i\sum_{l=0}^{r-1-i-j}(-1)^j{l+i\choose i}D^l(A_{i+j+l+1}).$$
In particular we have that $\tilde{A}_{ij}=0$ if $i+j\ge r$ and
$\tilde{A}_{ij}=(-1)^iA_r$ if $i+j=r-1$.
\end{lemma}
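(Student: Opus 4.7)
The approach is a direct computation: expand $\Omega(g,f)$ via the Leibniz rule and then reindex the resulting triple sum to read off each coefficient $\tilde{A}_{ij}$. The starting point is the closed form $\Omega(g,f)=\sum_{k=1}^r\sum_{m=0}^{k-1}(-1)^m D^m(A_kg)\,D^{k-1-m}(f)$ supplied by Corollary~\ref{corollary:omegafixed}.

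Applying Leibniz, $D^m(A_kg)=\sum_{l=0}^m\binom{m}{l}D^l(A_k)\,D^{m-l}(g)$, converts $\Omega(g,f)$ into a triple sum in $(k,m,l)$ whose summands have the shape $\pm\binom{m}{l}D^l(A_k)D^{m-l}(g)D^{k-1-m}(f)$. The natural reindexing is $i:=m-l$ (the power of $D$ landing on $g$) and $j:=k-1-m$ (the power of $D$ landing on $f$); inverting gives $m=k-1-j$, $l=k-1-i-j$ and $k=i+j+l+1$, with the inner index $l$ ranging over $0\le l\le r-1-i-j$. Substituting, using $\binom{k-1-j}{l}=\binom{i+l}{i}$ and $(-1)^m=(-1)^{i+l}$, and collecting the coefficient of $D^i(g)\,D^j(f)$ yields the formula asserted for $\tilde{A}_{ij}$.

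The two specializations then follow by inspection of the summation range: if $i+j\ge r$ no value of $l$ is admissible, so $\tilde{A}_{ij}=0$; if $i+j=r-1$ the unique admissible value is $l=0$, contributing $(-1)^i\binom{i}{i}A_r=(-1)^i A_r$. There is no genuine obstacle in this argument beyond the bookkeeping in the triple sum; the only care required is in verifying that the substitution $(k,m,l)\leftrightarrow(i,j,l)$ is a bijection onto the correct index set and that the sign tracking is correct, which I would double-check against the cases $r=1,2$ (where $r=2$ recovers the classical Lagrange bilinear concomitant for the formal adjoint).
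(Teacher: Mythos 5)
Your proof is the direct computation the paper alludes to (it offers nothing beyond the phrase ``can be proved by direct computation''), and the Leibniz expansion, the substitution $i=m-l$, $j=k-1-m$, $k=i+j+l+1$, and the verification that the index ranges match are all correct. The one thing you should not have glossed over is the sign: carrying $(-1)^m=(-1)^{i+l}$ through the reindexing gives
$$\tilde{A}_{ij}=(-1)^i\sum_{l=0}^{r-1-i-j}(-1)^{l}{l+i\choose i}D^l(A_{i+j+l+1}),$$
with $(-1)^{l}$ inside the sum, not the $(-1)^{j}$ printed in the statement; the two disagree whenever $l\not\equiv j\ \mod{2}$ (for instance, for $r=2$ and $i=j=0$ the direct expansion of $A_1gf+A_2gD(f)-D(A_2g)f$ gives $\tilde A_{00}=A_1-D(A_2)$, while the printed formula would give $A_1+D(A_2)$). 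Your version is the correct one: it is the only one compatible with the ``in particular'' claim $\tilde A_{ij}=(-1)^iA_r$ for $i+j=r-1$, which you rightly obtain from the $l=0$ term, and with the subsequent assertion $\det\Psi=A_r^r$. So the computation is sound, but your sentence claiming it ``yields the formula asserted'' is not literally true --- the displayed formula in the lemma has a sign typo that your own calculation exposes, and you should have said so. A minor related point: the low-order terms displayed in Corollary~\ref{corollary:omegafixed} are shifted by one relative to its general term; the expression you start from, $\sum_{k=1}^r\sum_{m=0}^{k-1}(-1)^mD^m(A_kg)D^{k-1-m}(f)$, is the one consistent with $\Omega=\sum_k[A_k,1]_{k-1}$ in the proof of Theorem~\ref{theorem:dual} and is indeed the right reading.
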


Let $\Psi$ be the $r\times r$-matrix with the entries $\tilde{A}_{ij}$. Then $\Psi$ is
invertible, since its determinant is $A_r^r$. As a consequence $\Omega$ is non-degenerate.

Suppose we have a basis of solutions $f_1,\ldots,f_r$ of $L(f)=0$ and a basis
of solutions $g_1,\ldots,g_r$ of $L^*(g)=0$. Let us write $C_{ij}=\Omega(g_i,f_j)$
for all $i,j$ in $\{1,2,\ldots,r\}$ and denote the $r\times r$-matrix with these
entries by $C$. Let 
$$\bbbf:=W(f_1,\ldots,f_r)=
\begin{pmatrix}
f_1 & f_2 & \ldots & f_r\\
D(f_1) & D(f_2) & \ldots & D(f_r)\\
\vdots & \vdots & &\vdots\\
D^{r-1}(f_1) & D^{r-1}(f_2) & \ldots & D^{r-1}(f_r)
\end{pmatrix}$$ be the $r\times r$ Wronskian matrix and similarly $\bbbg:=W(g_1,\ldots,g_r)$. Then the previous matrix relation can be rewritten as
\begin{equation}\label{prematrixduality}
\bbbg^t\Psi\bbbf=C.
\end{equation}
Since the matrices $\bbbf$ and $\bbbg$ are Wronskian matrices of sets of independent functions,
they are invertible and we get
\begin{equation}\label{matrixduality}
\bbbf C^{-1}\bbbg^t=\Psi^{-1}.
\end{equation}
Notice that $\Psi^{-1}$ has zeros above the anti-diagonal which goes from the left lower corner
to the right upper corner. Moreover, by appropriately choosing the basis $g_1,\ldots,g_r$, we can assume that $C$ is the identity matrix, therefore yielding the following immediate consequence of~\eqref{matrixduality}.

\begin{corollary}\label{diagonalidentity}
Let $f_1,\ldots,f_r$ be a basis of solutions of $L(f)=0$, where $L$ is defined in Theorem~\ref{theorem:dual}. Then there
exists a solution basis $g_1,\ldots,g_r$ of $L^*(g)=0$ such that
$$\sum_{i=1}^rD^k(g_i) D^l(f_i) \in K$$
for all $k,l\ge0$. In particular we have $\sum_{i=1}^rD^k(g_i)D^l(f_i)=0$
for all $k,l$ with $k+l<r-1$.
\end{corollary}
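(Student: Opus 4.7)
The plan is to combine Corollary~\ref{corollary:omegafixed} (asserting $\Omega(g,f)\in K_0$ whenever $L(f)=0$ and $L^*(g)=0$) with the matrix identity~\eqref{matrixduality} and the anti-triangular shape of $\Psi$ established by the preceding lemma.

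I would first pick any basis $g_1^{(0)},\ldots,g_r^{(0)}$ of solutions of $L^*(g)=0$ in a suitable differential extension of $K$ and form $C^{(0)}:=(\Omega(g_i^{(0)},f_j))_{i,j}$; by Corollary~\ref{corollary:omegafixed} its entries lie in $K_0$. The identity~\eqref{prematrixduality}, $\bbbg^t\Psi\bbbf=C^{(0)}$, together with invertibility of $\Psi$ (since $\det\Psi=\pm A_r^r\ne 0$) and of the Wronskian matrices, forces $C^{(0)}\in\mathrm{GL}_r(K_0)$. Next I would exploit the $K_0$-bilinearity of $\Omega$, which is transparent from the explicit formula $\Omega(g,f)=\sum_{i,j}\tilde A_{ij}D^i(g)D^j(f)$, so that setting
$$g_j:=\sum_{i=1}^r\bigl((C^{(0)})^{-1}\bigr)_{ji}\,g_i^{(0)}$$
produces a new basis of solutions of $L^*$ for which the corresponding matrix $C=(\Omega(g_i,f_j))$ is the identity. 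Substituting $C=I$ into~\eqref{matrixduality} gives $\bbbf\bbbg^t=\Psi^{-1}$, and reading off the $(l,k)$-entry yields
$$\sum_{i=1}^r D^l(f_i)\,D^k(g_i)=(\Psi^{-1})_{lk}\in K$$
for $0\le k,l\le r-1$. Extending this to all $k,l\ge 0$ is then immediate: using $L(f_i)=0$ and $L^*(g_i)=0$ we can express $D^r(f_i)$ and $D^r(g_i)$ (and higher derivatives) as $K$-linear combinations of the lower derivatives, and one inducts on $\max(k,l)$.

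For the vanishing assertion I would invoke the preceding lemma once more: $\Psi$ is anti-triangular, with nonzero anti-diagonal entries $(-1)^iA_r$ on $i+j=r-1$ and $\tilde A_{ij}=0$ for $i+j\ge r$. A short computation (conjugating by the reversal permutation reduces $\Psi$ to a triangular matrix) then shows that $\Psi^{-1}$ has zeros strictly above the anti-diagonal, so $(\Psi^{-1})_{lk}=0$ whenever $l+k<r-1$. The one point requiring care is the invertibility of $C^{(0)}$ over $K_0$ so that the basis change is legal; this, however, is automatic from the determinant computation above, and the rest of the argument is just bookkeeping of the $K_0$-bilinear form $\Omega$ against the anti-triangular shape of $\Psi^{-1}$.
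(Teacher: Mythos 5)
Your proposal is correct and follows essentially the same route as the paper: the text preceding the corollary obtains $\bbbg^t\Psi\bbbf=C$ with $C$ invertible over the constants, normalizes $C$ to the identity by a $K_0$-linear change of the basis $g_1,\ldots,g_r$, and reads off the entries of $\Psi^{-1}$, whose vanishing above the anti-diagonal gives the last assertion. You merely spell out the details (bilinearity of $\Omega$, invertibility of $C^{(0)}$, and the reduction of higher derivatives via $L$ and $L^*$) that the paper leaves implicit.
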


\section{Hypergeometric equation of order $r$}\label{hypergeometric}

Recall the hypergeometric equation~\eqref{eq:hypergeq} of order $r$,
$$z(\theta+a_1)\cdots(\theta+a_r)f=(\theta+b_1-1)\cdots(\theta+b_r-1) f$$
where $\theta=z{d\over dz}$ and we have the default parameter $b_r=1$. By Theorem~\ref{theorem:dual}, the dual equation reads
$$(\theta-a_1)\cdots(\theta-a_r)(zg)=(\theta-b_1+1)\cdots(\theta-b_r+1) g.$$
Hence
$$z(\theta-a_1+1)\cdots(\theta-a_r+1)g=(\theta-b_1+1)\cdots(\theta-b_r+1) g.$$
So the dual equation is again hypergeometric with parameters $a'_i=1-a_i$ for
$i=1,\ldots,r$ and $b'_j=2-b_j$ for $j=1,\ldots,r$.

Suppose that the $b_j$ are all distinct modulo $\bbbz$.
Then recall from the introduction that the hypergeometric equation has a standard basis of solutions around $z=0$ of
the form
$$f_i(z)=z^{1-b_i}\,{}_rF_{r-1}\!\left(\begin{matrix}a_1+1-b_i,\ldots,a_r+1-b_i\\
b_1+1-b_i,\ldots,\vee,
\ldots,b_r+1-b_i\end{matrix};z\right),\qquad 1\leq i\leq r.$$
Similarly we have the following basis of solutions for the dual equation.
$$g_i(z)=z^{b_i-1}\,{}_rF_{r-1}\!\left(\begin{matrix}b_i-a_1,\ldots,b_i-a_r\\
b_i+1-b_1,\ldots,\vee,
\ldots,b_1+1-b_r\end{matrix};z\right),\qquad 1\leq i\leq r.$$

To use our D-module language, the ground field we employ here is the field of rational functions
$K=H(z)$ where $H$ is the field $\bbbq$ extended with the parameters $a_i,b_j$. The field
extension ${\mathcal K}$ in which our solutions lie is the field of Laurent series $H((z))$ extended
with the functions $z^{b_j}$. The latter are defined as non-trivial solution of $\theta(f)=b_j f$.

\begin{proposition}\label{dualpairinggauss}
Let $\Psi$ and $C$ be the $r\times r$-matrices defined in Section~\ref{Dmodules},
but for the case of the hypergeometric equation of order $r$ and $D=\theta$.
For the solution bases of the equation
and its dual we take the bases just defined. Then $C$ is a diagonal matrix. The $i$-th diagonal
entry reads
$$C_{ii}=\prod_{j=1\atop j\neq i}^r(b_j-b_i),$$
where $b_r=1$.
\end{proposition}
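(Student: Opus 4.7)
My plan is to prove the proposition in two stages: first show that $C$ is diagonal by an exponent argument at $z=0$, then evaluate the diagonal entries using the Lagrange identity applied to monomial test functions.

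For diagonality, write $f_j(z)=z^{1-b_j}F_j(z)$ and $g_i(z)=z^{b_i-1}G_i(z)$ with $F_j,G_i\in 1+zH[[z]]$. Since $\theta^k(z^\gamma H(z))=z^\gamma(\theta+\gamma)^kH(z)$, we obtain
\[
\Omega(g_i,f_j)=z^{b_i-b_j}\sum_{k,l=0}^{r-1}\tilde{A}_{kl}(z)\bigl[(\theta+b_i-1)^kG_i(z)\bigr]\bigl[(\theta+1-b_j)^lF_j(z)\bigr],
\]
where the sum on the right belongs to $H[[z]]$. Since $\Omega(g_i,f_j)=C_{ij}\in H$ is constant and $b_i-b_j\notin\bbbz$ whenever $i\ne j$, the factor $z^{b_i-b_j}$ forces $C_{ij}=0$ for $i\ne j$. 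For $i=j$, evaluating at $z=0$, using $F_i(0)=G_i(0)=1$ and the fact that $\theta^mF_i|_{z=0}=\theta^mG_i|_{z=0}=0$ for $m\ge 1$, yields $(\theta+b_i-1)^kG_i|_{z=0}=(b_i-1)^k$ and $(\theta+1-b_i)^lF_i|_{z=0}=(1-b_i)^l$, so
\[
C_{ii}=\sum_{k,l=0}^{r-1}\tilde{A}_{kl}(0)\,(b_i-1)^k(1-b_i)^l.
\]

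To put this sum into closed form, I would invoke the Lagrange identity $\theta\Omega(g,f)=gL(f)-L^*(g)f$ (a general identity for arbitrary $f,g$, encoded in the construction of $\Omega$) on the test monomials $f=z^\alpha$ and $g=z^\beta$. Since $\theta^k(z^\gamma)=\gamma^kz^\gamma$, one has $\Omega(z^\beta,z^\alpha)=z^{\alpha+\beta}R(\alpha,\beta;z)$ with $R(\alpha,\beta;z):=\sum_{k,l}\tilde{A}_{kl}(z)\beta^k\alpha^l$, and the identity becomes
\[
(\alpha+\beta)R(\alpha,\beta;z)+\theta R(\alpha,\beta;z)=L_\alpha(z)-L^*_\beta(z),
\]
where $L_\alpha(z):=\sum_kA_k(z)\alpha^k$ and $L^*_\beta(z):=\sum_k(-1)^k(\theta+\beta)^k(A_k)$. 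At $z=0$ we have $\theta R|_{z=0}=0$ (since $R$ is polynomial in $z$), and $A_k(0)=e_{r-k}(b_1-1,\ldots,b_r-1)$, so $L_\alpha(0)$ is the indicial polynomial $p(\alpha):=\prod_{j=1}^r(\alpha+b_j-1)$ of \eqref{eq:hypergeq} at $z=0$, while $L^*_\beta(0)=p(-\beta)$. This yields the polynomial identity
\[
R(\alpha,\beta;0)=\frac{p(\alpha)-p(-\beta)}{\alpha+\beta},
\]
a divided difference in the variables $(\alpha,-\beta)$.

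Finally, specializing $\alpha=1-b_i$ and $\beta=b_i-1=-\alpha$ collapses this divided difference at coincident arguments into a derivative: $C_{ii}=R(1-b_i,b_i-1;0)=p'(1-b_i)$. Since $p'(\alpha)=\sum_j\prod_{k\ne j}(\alpha+b_k-1)$, every term with $j\ne i$ contains the vanishing factor $\alpha+b_i-1=0$ at $\alpha=1-b_i$, so only the $j=i$ summand survives, giving $C_{ii}=\prod_{j\ne i}(b_j-b_i)$. The conceptually central step is recognizing $C_{ii}$ as the derivative of the indicial polynomial; once that is in hand the remaining computations are straightforward, and I expect no serious obstacle.
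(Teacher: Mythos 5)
Your proof is correct, and it reaches the paper's key insight --- that $C_{ii}$ is the derivative of the indicial polynomial $p(x)=\prod_j(x+b_j-1)$ evaluated at $x=1-b_i$ --- but by a somewhat different computational route. The diagonality argument is identical to the paper's (the factor $z^{b_i-b_j}$ with $b_i-b_j\notin\bbbz$ forces $C_{ij}=0$), and so is the reduction of $C_{ii}$ to the constant-term quadratic form $\sum_{k,l}\tilde A_{kl}(0)(b_i-1)^k(1-b_i)^l$. Where you diverge is in evaluating that form: the paper writes out the matrix $(\tilde A_{kl}(0))$ explicitly in terms of the coefficients $B_j$ of $(\theta+b_1-1)\cdots(\theta+b_r-1)$ and obtains $C_{ii}=B_1+2B_2(1-b_i)+\cdots+rB_r(1-b_i)^{r-1}=p'(1-b_i)$ by direct calculation, whereas you feed the test monomials $z^\alpha$, $z^\beta$ into the Lagrange identity $\theta\,\Omega(g,f)=gL(f)-L^*(g)f$ and read off the closed form $R(\alpha,\beta;0)=\bigl(p(\alpha)-p(-\beta)\bigr)/(\alpha+\beta)$, a divided difference that collapses to $p'(\alpha)$ at $\beta=-\alpha$. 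Your route has the advantage of never needing the explicit formula for the $\tilde A_{kl}$ (the paper's Lemma, whose stated sign $(-1)^j$ should in fact be $(-1)^l$, a typo your method sidesteps entirely), at the cost of invoking the full Lagrange identity for arbitrary $f,g$ rather than only the constancy statement of Corollary~\ref{corollary:omegafixed}; that identity is indeed what the telescoping computation in the proof of Theorem~\ref{theorem:dual} establishes, so your appeal to it is legitimate, though it deserves one explicit line of justification rather than the phrase ``encoded in the construction of $\Omega$.''
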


\begin{proof}
The elements of $C$ belong to $H$, the constant field. In particular, $C_{ij}\in H$ for all $i\ne j$.
However, $C_{ij}$ is also of the form $z^{-b_i+b_j}$ times a power series in $z$.
Since $b_i\ne b_j$ if $i\ne j$ we can only conclude that $C_{ij}=0$ for all
distinct $i,j$. For the computation of $C_{ii}$ it suffices to look only at
constant terms after cancellation of the powers $z^{1-b_i}$ and $z^{b_i-1}$
in formula~\eqref{prematrixduality}.
This means $C_{ii}$ equals
$$\left(1,b_i-1,\ldots,(b_i-1)^{r-1}\right)
\left(\begin{matrix}
B_1 & B_2 & \cdots     & B_r\\
-B_1 & -B_2 &  \cdots    & 0\\
B_2 & B_3 &    \cdots      & 0\\
\vdots &\vdots & &\vdots\\
(-1)^{r-1}B_{r-1} & (-1)^{r-1}B_r &     \cdots                  & 0\\
(-1)^rB_r & 0 & \cdots               & 0\end{matrix}\right)
\left(\begin{matrix}1\\ 1-b_i\\ \vdots\\ (1-b_i)^{r-1}\end{matrix}\right),
$$
where the $B_j$'s are the constants in the coefficients of the hypergeometric
operator. That is,
$$B_0+B_1\theta+\cdots+B_r\theta^r=(\theta-1+b_1)\cdots(\theta-1+b_r).$$
Direct calculation of $C_{ii}$ yields
$$C_{ii}=B_1+2B_2(1-b_i)+3B_3(1-b_i)^2+\cdots+rB_r(1-b_i)^{r-1}.$$
Note that $C_{ii}$ is simply the derivative of $(x-1+b_1)\cdots(x-1+b_r)$
evaluated at $x=1-b_i$. Hence $C_{ii}=\prod_{j\ne i}(b_j-b_i)$,
as asserted.
\end{proof}

As an immediate consequence of Corollary~\ref{diagonalidentity} and Proposition~\ref{dualpairinggauss}
we get the following result, which implies Theorem~\ref{duality} through explicitation of the desired terms in the matrix $\Psi^{-1}$ from equation~\eqref{matrixduality}.
\begin{corollary}\label{finalgauss}
Let $(f_i)_i$ and $(g_j)_j$ be the basis of solutions of the hypergeometric equation and the
dual equation, as given above. Let $H$ be the field generated over $\bbbq$ by  the $a_i,b_j$ and $q$. Then, with
$C_{ii}$ as in Proposition~\ref{dualpairinggauss}, we get
$$\sum_{i=1}^r{1\over C_{ii}}\theta^k(f_i)(z)\,\theta^l(g_i)(z)\in H(z),$$
for all integers $k,l\ge0$. In particular we have $\sum_{i=1}^r\theta^k(f_i)(z)\,\theta^l(g_i)(z)=0$
for all $k,l$ with $k+l<r-1$.
\end{corollary}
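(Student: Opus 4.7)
The plan is to deduce the statement as a direct consequence of Proposition \ref{dualpairinggauss} combined with the matrix identity \eqref{matrixduality}. By Proposition \ref{dualpairinggauss}, the pairing matrix $C = (\Omega(g_i,f_j))$ for the explicit standard bases $(f_i)$ and $(g_i)$ is already diagonal, with entries $C_{ii} = \prod_{j \neq i}(b_j - b_i)$ lying in the constant field $H$; these are nonzero because the $b_j$ are distinct modulo $\bbbz$.

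The first step is to rescale the dual basis by setting $\tilde g_i := g_i / C_{ii}$. Since each $C_{ii}$ is a nonzero constant of $H$, the $\tilde g_i$ still form a solution basis of the dual equation, and the new pairing matrix $\tilde C = (\Omega(\tilde g_i, f_j))$ is the identity. Substituting this rescaled basis into \eqref{matrixduality} gives $\bbbf\,\tilde{\bbbg}^{t} = \Psi^{-1}$. Reading off the $(k,l)$ entry on both sides yields
$$\sum_{i=1}^r \theta^k(f_i)\,\theta^l(\tilde g_i) = (\Psi^{-1})_{kl},$$
which after unwinding the rescaling is precisely $\sum_i C_{ii}^{-1}\,\theta^k(f_i)\,\theta^l(g_i) = (\Psi^{-1})_{kl}$. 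Since $\Psi$ has entries in $K = H(z)$ and is invertible (its determinant equals $A_r^r$ with $A_r = 1 - z \neq 0$), the entries of $\Psi^{-1}$ lie in $H(z)$, giving the rationality claim for $0 \leq k, l \leq r-1$. The vanishing when $k + l < r - 1$ is then immediate from the anti-triangular structure of $\Psi^{-1}$ noted after \eqref{matrixduality}, or equivalently from Corollary \ref{diagonalidentity} applied to $(f_i)$ and $(\tilde g_i)$.

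Finally, to extend rationality to all $k, l \geq 0$, I would use the fact that each $f_i$ satisfies the order-$r$ hypergeometric equation \eqref{eq:hypergeq} with coefficients in $H(z)$, so any $\theta^k(f_i)$ with $k \geq r$ is an $H(z)$-linear combination of $f_i, \theta(f_i), \ldots, \theta^{r-1}(f_i)$; the same reduction holds for $\theta^l(g_i)$ via the dual equation. The displayed sum then becomes an $H(z)$-linear combination of the already-treated cases, and hence still lies in $H(z)$. The substantive work has already been accomplished in Proposition \ref{dualpairinggauss}; the only delicate point in this packaging is keeping track of the indices correctly when passing from the matrix identity to coordinates, which is a routine check.
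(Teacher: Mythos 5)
Your proposal is correct and follows essentially the same route as the paper: the paper derives this corollary as an ``immediate consequence'' of Corollary~\ref{diagonalidentity} and Proposition~\ref{dualpairinggauss}, and your rescaling $\tilde g_i=g_i/C_{ii}$ (making $C$ the identity), the reading-off of entries in $\bbbf C^{-1}\bbbg^t=\Psi^{-1}$, and the reduction of higher $\theta$-derivatives via the differential equation are exactly the steps being compressed there. The only remark worth making is that the paper's final displayed vanishing statement omits the weights $1/C_{ii}$, which appears to be a typo (compare Corollary~\ref{finalheine}); your weighted version is the one that actually follows from the argument.
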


\section{$\Delta$-modules and duality}\label{deltamodules}

Let $K$ be a field of characteristic zero and $\Delta:K\to K$ a fixed isomorphism. We denote by $K_0:=\{a\in K|\Delta(a)=a\}$ the subfield of constants.
\begin{definition}\label{definition:deltamodules}
A $K$-vector space $M$ is called a $\Delta$-module (over $K$) if there
is a bijective map $\nabla:M\to M$
such that
\begin{enumerate}
\item $\nabla(m_1+m_2)=\nabla(m_1)+\nabla(m_2)$ for all $m_1,m_2\in M$,
\item $\nabla(fm)=\Delta(f)\nabla(m)$ for all $f\in K$ and $m\in M$.
\end{enumerate}
\end{definition}

We denote $\nabla$ by $\Delta$ again.
Consider the skew ring $K[\Delta,\Delta^{-1}]$ and an operator $L\in K[\Delta,\Delta^{-1}]$. Let
$(L)=\{\mu L|\mu\in K[\Delta,\Delta^{-1}]\}$ be the left ideal generated by $L$.
Then $K[\Delta,\Delta^{-1}]/(L)$
is again a $\Delta$-module, the module associated to the operator $L$. The action of $\Delta$
is given by left multiplication with $\Delta$. In order for
$\Delta$ to be bijective on this module we need that $L(0)\ne0$, which we will assume from now on.

\begin{definition}\label{definition:deltatensor}
Let $M,M'$ be $\Delta$-modules over $K$.

A $K$-linear map $\varphi:M\to M'$ is called a $\Delta$-homomorphism if
$\Delta\circ \varphi=\varphi\circ\Delta$. If $\varphi$ is a $K$-vector space isomorphism
we call $\varphi$ a $\Delta$-isomorphism.
\smallskip

A $\Delta$-module $M$ is called irreducible if $\{\v 0\}$ and $M$ are the
only $\Delta$-submodules.
\smallskip

The tensor product $M\otimes M'$ can be given a $\Delta$-module structure
via
$$\Delta(m\otimes m')=\Delta(m)\otimes\Delta(m'),$$
for all $m\in M,m'\in M'$.
\smallskip

The dual vector space $M^*$ can be given a $\Delta$-module structure via
$$\langle \Delta(m^*),m\rangle = \Delta(\langle m^*,\Delta^{-1}(m)\rangle).$$
Here $\langle m^*,m\rangle$ stands for the evaluation of $m^*$ in $m$.
\end{definition}

\begin{proposition}\label{explicitqdual}
Let $M$ be a finite dimensional $\Delta$-module over $K$. Let $m_1,\ldots,m_r$ be a
$K$-basis and let $A_{ij}\in K$ be such that
$$\Delta(m_i)=\sum_{j=1}^rA_{ij}m_j$$
for all $i$. Define the dual basis $m^*_i$ by $\langle m^*_i,m_j\rangle=\delta_{ij}$ for
$i,j=1,\ldots,r$, where $\delta_{ij}$ is the Kronecker delta. Let $(B_{ij})_{1\leq i,j\leq r}$
be the transposed inverse of $(A_{ij})_{1\leq i,j\leq r}$. Then,
\begin{enumerate}
\item $\Delta(m^*_i)=\sum_{j=1}^rB_{ij}m^*_j$ for $i=1,\ldots,r$,
\item $\omega=\sum_{i=1}^rm^*_i\otimes m_i\in M^*\otimes M$ does not depend
on the choice of basis $m_i$ and $\Delta(\omega)=\omega$.
\end{enumerate}
\end{proposition}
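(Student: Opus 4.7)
The approach mirrors the argument given for Proposition \ref{explicitdual} in the differential setting, but one must be careful about the fact that $\Delta$ is not $K$-linear: instead $\Delta(fm)=\Delta(f)\Delta(m)$, so matrix computations get twisted by $\Delta$ itself. The first preparatory step I would carry out is to express $\Delta^{-1}(m_k)$ in the basis. Writing $\Delta^{-1}(m_k)=\sum_l D_{kl} m_l$ and applying $\Delta$ to both sides, semilinearity together with $\Delta(m_l)=\sum_j A_{lj} m_j$ forces $\sum_l \Delta(D_{kl})\, A_{lj}=\delta_{kj}$, hence $D_{kl}=\Delta^{-1}((A^{-1})_{kl})$.

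For assertion (1), I would then use the defining formula from Definition \ref{definition:deltatensor} to compute
$$\langle \Delta(m^*_i), m_k\rangle = \Delta\bigl(\langle m^*_i, \Delta^{-1}(m_k)\rangle\bigr) = \Delta(D_{ki}) = (A^{-1})_{ki}.$$
Since $B_{ij}$ is by definition the $(i,j)$-entry of $(A^{-1})^t$, the right-hand side equals $B_{ik}$, so $\Delta(m^*_i)=\sum_j B_{ij} m^*_j$ as claimed.

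For assertion (2), I would apply the tensor product rule $\Delta(m^*\otimes m)=\Delta(m^*)\otimes\Delta(m)$ together with part (1), getting
$$\Delta(\omega) = \sum_i \Delta(m^*_i)\otimes\Delta(m_i) = \sum_{i,j,k} B_{ij} A_{ik}\,(m^*_j\otimes m_k),$$
and the inner sum collapses via $\sum_i B_{ij} A_{ik} = (A^{-1}A)_{jk} = \delta_{jk}$, recovering $\omega$. Basis independence is a separate short check: under a change of basis $m'_i=\sum_j P_{ij} m_j$ the dual basis transforms as $(m'_i)^*=\sum_l (P^{-1})_{li} m^*_l$, and the identity $\sum_i (P^{-1})_{li} P_{ij}=\delta_{lj}$ gives $\sum_i (m'_i)^*\otimes m'_i=\omega$.

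The only place where the semilinear structure of $\Delta$ intervenes nontrivially is the preliminary computation of $\Delta^{-1}(m_k)$; once that formula is in hand, parts (1) and (2) reduce to the same kind of index bookkeeping as in the differential case, with the matrix $-A^t$ of Proposition \ref{explicitdual} replaced here by the transposed inverse $(A^{-1})^t$.
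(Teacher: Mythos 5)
Your proof is correct and follows essentially the same route as the paper: a direct computation from the definition of the dual $\Delta$-module structure, reducing assertion (1) to the matrix identity that forces $E=(A^t)^{-1}$, with (2) then following by index bookkeeping. The only (cosmetic) difference is that you evaluate $\Delta(m_i^*)$ against $m_k$, which requires first expressing $\Delta^{-1}(m_k)$ in the basis, whereas the paper evaluates against $\Delta(m_k)$ and so never needs $\Delta^{-1}$ explicitly.
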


\begin{proof}
To prove the first assertion, let us define the matrix with entries $E_{ij}$ by
the relation $\Delta(m_i^*)=\sum_{i=1}^rE_{ij}m_j^*$. Combine this with
$m_k=\sum_{l=1}^rA_{kl}m_l$ to get
$$\delta_{ik}=\langle\Delta(m_i^*),\Delta(m_k)\rangle
=\sum_{j,l=1}^rE_{ij}A_{kl}\langle m_j^*,m_l\rangle$$
for any $i,k=1,\ldots,r$. Working out the right hand side yields
$\delta_{ik}=\sum_{j=1}^rE_{ij}A_{kj}$, which shows that $(E_{ij})_{1\leq i,j\leq r}$ is
the transposed inverse of $(A_{ij})_{1\leq i,j\leq r}$.

The second assertion then follows by straightforward computation.
\end{proof}

In the following proposition we consider elements $\Omega\in N\otimes M$
where $N,M$ are $\Delta$-modules of the same rank $r$. We say that $\Omega$ is
 non-degenerate if it cannot be written in the form $\sum_{i=1}^sn_i\otimes m_i$
with $m_i\in M,n_i\in N$ and $s<r$.

\begin{proposition}\label{proposition:deltaisomorphism}
Let $M,N$ be $\Delta$-modules of finite rank $r$. Let $m_1,\ldots,m_r$ be a
basis of $M$ and $m_1^*,\ldots,m_r^*$ the corresponding dual basis of $M^*$.

Then the
$\Delta$-homomorphisms $M^*\to N$ are in one-to-one correspondence with the tensors
$\Omega\in N\otimes M$ such that $\Delta(\Omega)=\Omega$.

The correspondence sends a $\Delta$-morphism
$\varphi:M^*\to N$ to $\Omega=\sum_{i=1}^r\varphi(m^*_i)\otimes m_i$.

Conversely, to a tensor $\Omega=\sum_{i=1}^r n_i\otimes m_i$ with $\Delta(\Omega)=\Omega$,
the $K$-linear map generated by $m^*_i\mapsto n_i$ is a $\Delta$-morphism from
$M^*$ to $N$.

Moreover, $\varphi$ is a $\Delta$-isomorphism if and only if $\Omega$ is non-degenerate.
\end{proposition}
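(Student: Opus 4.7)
The plan is to mirror the proof of Proposition \ref{correspondence} in the multiplicative $\Delta$-setting, substituting Proposition \ref{explicitqdual} for Proposition \ref{explicitdual}. Given a $K$-linear map $\varphi\colon M^*\to N$, I set $n_i:=\varphi(m_i^*)$ for $i=1,\ldots,r$; since the $m_i^*$ form a $K$-basis of $M^*$, the data of $\varphi$ is equivalent to the tuple $(n_1,\ldots,n_r)\in N^r$.

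First I would translate the $\Delta$-homomorphism condition $\Delta\circ\varphi=\varphi\circ\Delta$ into a concrete linear system on the $n_i$. Evaluating at $m_i^*$ and expanding via Proposition \ref{explicitqdual}(1), the condition reads $\Delta(n_i)=\sum_{j=1}^r B_{ij}n_j$ for every $i$, where $(B_{ij})$ denotes the transposed inverse of $(A_{ij})$. Next, for the candidate tensor $\Omega=\sum_i n_i\otimes m_i$, the tensor rule $\Delta(m\otimes m')=\Delta(m)\otimes\Delta(m')$ yields
$$\Delta(\Omega)=\sum_i\Delta(n_i)\otimes\Delta(m_i)=\sum_j\Bigl(\sum_i A_{ij}\,\Delta(n_i)\Bigr)\otimes m_j.$$
Hence $\Delta(\Omega)=\Omega$ is equivalent to $\sum_i A_{ij}\,\Delta(n_i)=n_j$ for every $j$; inverting the matrix $(A_{ij})^t$ recovers exactly the system $\Delta(n_i)=\sum_j B_{ij}n_j$, which closes the equivalence in both directions and gives the claimed bijection between $\Delta$-homomorphisms $M^*\to N$ and fixed tensors in $N\otimes M$.

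For the final assertion I would observe that $\Omega=\sum_i n_i\otimes m_i$ with $(m_i)$ a $K$-basis of $M$ is non-degenerate precisely when $n_1,\ldots,n_r$ are $K$-linearly independent. Since $M^*$ and $N$ have the same $K$-dimension $r$ and $\varphi$ is determined by $\varphi(m_i^*)=n_i$, this independence is equivalent to $\varphi$ being injective, and hence to $\varphi$ being a $\Delta$-isomorphism. The only real obstacle is index bookkeeping: one must carefully track the transposition and inversion when passing between the rule $\Delta(m_i^*)=\sum_j B_{ij}m_j^*$ and the rewriting of $\Delta(\Omega)=\Omega$, using that $A^tB=BA^t=I$ for square invertible matrices. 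Beyond that the argument is a direct formal verification parallel to the differential case.
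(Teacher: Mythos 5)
Your proposal is correct and follows essentially the same route as the paper's own proof: translate the condition $\Delta\circ\varphi=\varphi\circ\Delta$ via Proposition~\ref{explicitqdual} into $\Delta(n_i)=\sum_j B_{ij}n_j$, expand $\Delta(\Omega)=\sum_i\Delta(n_i)\otimes\Delta(m_i)$ and regroup using $\Delta(m_i)=\sum_j A_{ij}m_j$ to get $\sum_i A_{ij}\Delta(n_i)=n_j$, and identify the two systems through the transposed inverse; the non-degeneracy argument via linear independence of the $n_i$ is also the same. Your index bookkeeping is in fact slightly more explicit than the paper's, which contains a small typographical slip at the corresponding step.
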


\begin{proof}
Suppose we are given a $K$-linear map $\varphi:M^*\to N$.
It is determined by its values $n_i:=\varphi(m_i^*)$
for $i=1,\ldots,r$. We write the $\Delta$-homomorphism condition in two ways.

First of all, by definition one should have $\Delta(n_i)=\Delta(\varphi(m_i^*))=
\varphi(\Delta(m_i^*))$ for all $i$. Using Proposition \ref{explicitqdual} this is the
same as
$$\Delta(n_i)=\varphi\left(\sum_{j=1}^rB_{ij}m_j^*\right)=\sum_{j=1}^rB_{ij}n_j, \quad i=1,\dots,r.$$
Now define $\Omega=\sum_{i=1}^r n_i\otimes m_i$ and subsitute it into $\Delta(\Omega)=\Omega$.
We get
$$\Delta(\Omega)=\sum_{i=1}^r \Delta(n_i)\otimes \Delta(m_i)=\Omega.$$
Using $\Delta(m_i)=\sum_{j=1}^rA_{ij}m_j$ rewrite this as
$$\sum_{j=1}^r \left(\sum_{i=1}^r\Delta(n_j)A_{ij}\right)\otimes m_j=\Omega.$$
This is equivalent to $\sum_{i=1}^r\Delta(n_i)A_{ij}=n_j$ for $j=1,\ldots,r$.
Here we recognize our formulation of the $\Delta$-homomorphism condition.

Note that the tensor $\sum_{i=1}^rn_i\otimes m_i$ is non-degenerate if and
only if $n_1,\ldots,n_r$ are linearly independent. But this is equivalent to
$\varphi$ being an isomorphism.
\end{proof}

Let  ${\mathcal K}$ be a field extension of $K$
and suppose $\Delta$ extends to an isomorphism $\Delta:{\mathcal K}\to{\mathcal K}$.
Suppose also that the field of fixed elements under $\Delta$ is still $K_0$.
Let $h_1,\ldots,h_r\in{\mathcal K}$. Define the Wronskian matrix by
$$W(h_1,\ldots,h_r):=\left(\begin{matrix}
h_1 & h_2 & \ldots & h_r\\
\Delta(h_1) & \Delta(h_2) & \ldots & \Delta(h_r)\\
\vdots & & & \vdots\\
\Delta^{r-1}(h_1) & \Delta^{r-1}(h_2) & \ldots & \Delta^{r-1}(h_r)\end{matrix}\right).$$

\begin{lemma}\label{lemma:wronskian}We have that $\det(W(h_1,\ldots,h_r))=0$ if and only if
$h_1,\ldots,h_r$ are linearly dependent over $K_0$.
\end{lemma}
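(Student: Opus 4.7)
The plan is to prove the two implications separately; the easy direction is essentially the definition of $K_0$, and the converse I would handle by induction on $r$.

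For the ``if'' direction, suppose $\sum_{i=1}^r c_i h_i=0$ with $c_i\in K_0$ not all zero. Applying $\Delta^k$ to this identity and using $\Delta(c_i)=c_i$ gives $\sum_i c_i\Delta^k(h_i)=0$ for every $k=0,\ldots,r-1$, which is precisely a nontrivial $K_0$-linear relation among the columns of $W(h_1,\ldots,h_r)$. Hence $\det W=0$.

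For the converse I would induct on $r$. The base case $r=1$ is trivial since $W(h_1)=(h_1)$. For the inductive step, assume $\det W(h_1,\ldots,h_r)=0$. If the principal minor $\det W(h_1,\ldots,h_{r-1})$ also vanishes, the induction hypothesis already produces a nontrivial $K_0$-relation among $h_1,\ldots,h_{r-1}$, which extends trivially to $h_1,\ldots,h_r$. Otherwise the first $r-1$ columns of $W(h_1,\ldots,h_r)$ are $\mathcal{K}$-linearly independent, so singularity of the full matrix forces unique $\mu_1,\ldots,\mu_{r-1}\in\mathcal{K}$ with
$$\Delta^k(h_r)=\sum_{i=1}^{r-1}\mu_i\,\Delta^k(h_i),\qquad k=0,1,\ldots,r-1.$$

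The crux is then to promote each $\mu_i$ from $\mathcal{K}$ to $K_0$, for once this is done the identity $h_r-\sum_i\mu_i h_i=0$ is the desired $K_0$-dependence. I would apply $\Delta$ to the above relation for each $k=0,1,\ldots,r-2$; using the multiplicative property of $\Delta$ this yields $\Delta^{k+1}(h_r)=\sum_i\Delta(\mu_i)\Delta^{k+1}(h_i)$. Subtracting from this the original relation at index $k+1$ gives
$$\sum_{i=1}^{r-1}\bigl(\Delta(\mu_i)-\mu_i\bigr)\Delta^{k+1}(h_i)=0,\qquad k=0,\ldots,r-2.$$
The coefficient matrix of this $(r-1)\times(r-1)$ homogeneous system is $W(h_1,\ldots,h_{r-1})$ with $\Delta$ applied entrywise, whose determinant equals $\Delta(\det W(h_1,\ldots,h_{r-1}))$; this is nonzero because $\Delta$ is an automorphism of $\mathcal{K}$ and we are in the case where $\det W(h_1,\ldots,h_{r-1})\neq 0$. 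Therefore $\Delta(\mu_i)=\mu_i$ for all $i$, i.e., $\mu_i\in K_0$.

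The principal obstacle is the bookkeeping in the shift-and-subtract step: one must apply $\Delta$ to exactly $r-1$ of the $r$ defining relations so that the resulting system is governed by the \emph{first} $r-1$ rows of the Wronskian, which is precisely the block assumed to be nondegenerate. The case split on $\det W(h_1,\ldots,h_{r-1})$ is therefore essential; once it is in place, the argument reduces to standard linear algebra carried out over $\mathcal{K}$, combined with the defining property that the $\Delta$-fixed subfield of $\mathcal{K}$ is still $K_0$.
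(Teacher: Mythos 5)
Your proof is correct. It follows the same overall strategy as the paper---induction on $r$, a case split, and the key step of applying $\Delta$ to the linear relation among the columns of the Wronskian and comparing with the original relation to force the coefficients into the fixed field $K_0$---but the implementation differs in a way worth noting. The paper splits on the rank of the full matrix $W(h_1,\ldots,h_r)$; in the rank-$(r-1)$ case it works with a kernel vector $(\alpha_1,\ldots,\alpha_r)$ that is unique only up to a common factor, needs an auxiliary row-dependence argument to show the column relations persist for $l=r$ before applying $\Delta^{-1}$, and then invokes projective uniqueness to get $\Delta^{-1}(\alpha_i)=\lambda\alpha_i$ and hence $\alpha_i/\alpha_j\in K_0$. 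You instead split on whether the leading principal minor $\det W(h_1,\ldots,h_{r-1})$ vanishes; when it does not, the first $r-1$ columns of the full matrix are independent, so you can solve for the last column with coefficients $\mu_i$ that are genuinely unique (the coefficient of $h_r$ being normalized to $1$), and your shift-and-subtract step reduces to the invertibility of the entrywise image of $W(h_1,\ldots,h_{r-1})$ under $\Delta$, whose determinant is $\Delta\bigl(\det W(h_1,\ldots,h_{r-1})\bigr)\neq 0$ because $\Delta$ is a field automorphism. This buys a shorter argument with no projective ambiguity to track and no row-rank digression; both versions use exactly the same hypotheses ($\Delta$ an automorphism of $\mathcal K$ whose fixed field is $K_0$), and your case split is exhaustive, so the argument is complete.
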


\begin{proof}
When $h_1,\ldots,h_r$ are $K_0$-linear dependent the vanishing of the determinant  is
straightforward.

To prove the converse statement we proceed by induction on $r$. Suppose that
$\det(W(h_1,\ldots,h_r))=0$.
When $r=1$ we trivially get $h_1=0$.
Suppose $r>1$ and our statement holds for $h_1,\ldots,h_{r-1}$.

If $\rank_{\mathcal K}(W)<r-1$, then $W(h_1,\ldots,h_{r-1})$ has rank $<r-1$ and
the induction hypothesis implies that $h_1,\ldots,h_{r-1}$ are $K_0$-linear dependent.

Suppose $\rank_{\mathcal K}(W)=r-1$. Then there exist $\alpha_1,\ldots,\alpha_r\in{\mathcal K}$,
unique up to a common factor, such that $\sum_{i=1}^r\alpha_i\Delta^l(h_i)=0$ for
$l=0,1,\ldots,r-1$ (linear dependence of columns of $W$).

Let $s$ be the smallest index such that the $s$-th row is ${\mathcal K}$-linear dependent
of the previous rows. Then, by repeated applications of $\Delta$ we see that the
next rows are also dependent of the first $s-1$ rows. Since the rank of $W$ is $r-1$
we conclude that $s=r$ and there exist $\beta_0,\ldots,\beta_{r-2}\in{\mathcal K}$ such that
$\Delta^{r-1}(h_j)=\sum_{l=0}^{r-2}\beta_l\Delta^l(h_j)$ for $j=1,\ldots,r$. Apply $\Delta$ to obtain
$\Delta^r(h_j)=\sum_{l=0}^{r-2}\Delta(\beta_l)\Delta^{l+1}(h_j)$. It follows from this
that the above column relations also hold in case we take $l=r$.
Hence, after application of $\Delta^{-1}$,
$$\sum_{i=1}^r\Delta^{-1}(\alpha_i)\Delta^l(h_i)=0,$$
for $l=0,1,\ldots,r-1$. Since the coefficients of the column relations of $W$ are
unique up to a common factor, we find that there exists $\lambda\in{\mathcal K}$ such
that $\Delta^{-1}(\alpha_i)=\lambda\alpha_i$ for $i=1,2,\ldots,r$. Suppose that 
$\alpha_j\ne0$. Then we find that $\alpha_i/\alpha_j\in K_0$ for all $i$.
This gives us the desired $K_0$-linear relation
between the $h_i$'s.
\end{proof}

The next result identifies (up to $\Delta$-isomorphism) the dual difference operator associated with any fixed difference operator  $L\in K[\Delta,\Delta^{-1}]$ satisfying $A_0:=L(0)\neq0$.

\begin{theorem}\label{theorem:qdual}
Consider the $r$-th order difference operator
$$ L=A_r\Delta^r+A_{r-1}\Delta^{r-1}+\cdots+A_1\Delta+A_0,$$
with $A_i\in K$ for all $i$, and $A_r,A_0\neq0$. Then the dual of $K[\Delta,\Delta^{-1}]/(L)$ is $\Delta$-isomorphic
to $K[\Delta,\Delta^{-1}]/(L^*)$ where
$$L^*=\Delta^{r-1}(A_0)\Delta^r+\Delta^{r-2}(A_1)\Delta^{r-1}+
\cdots+A_{r-1}\Delta+\Delta^{-1}(A_r).$$
\end{theorem}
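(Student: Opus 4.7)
My plan is to mimic the proof of Theorem~\ref{theorem:dual} in the difference setting. By Proposition~\ref{proposition:deltaisomorphism}, it suffices to exhibit a non-degenerate tensor
\[
\Omega\in N\otimes M,\qquad M:=K[\Delta,\Delta^{-1}]/(L),\ \ N:=K[\Delta,\Delta^{-1}]/(L^*),
\]
with $\Delta(\Omega)=\Omega$, since this produces the desired $\Delta$-isomorphism $M^*\cong N$.

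The natural candidate for $\Omega$ comes from the discrete analogue of integration by parts: for $j\ge 1$ and any $f,g$, a direct telescoping gives
\[
g\cdot A_j\Delta^j(f)-\Delta^{-j}(gA_j)\cdot f \;=\;(1-\Delta^{-1})\!\left(\sum_{k=0}^{j-1}\Delta^{-k}\bigl(g\,A_j\,\Delta^j(f)\bigr)\right),
\]
so summing over $j=1,\dots,r$ produces an expression that is $\Delta$-invariant whenever $Lf=0$ and $\sum_j\Delta^{-j}(A_jg)=0$; the latter condition is, up to one shift of $g$, exactly the equation $L^*g=0$. Importing this shift and transporting the formula to the module level, I would propose
\[
\Omega \;:=\; \sum_{j=1}^{r}\sum_{k=0}^{j-1}\Delta^{-k}(A_j)\,\overline{\Delta^{1-k}}\otimes\overline{\Delta^{j-k}},
\]
where $\overline{\Delta^m}$ denotes the class of $\Delta^m$ in $N$ (first factor) or $M$ (second factor).

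The core of the proof is the verification that $\Delta(\Omega)=\Omega$. Set $T^{(j)}_k:=\Delta^{-k}(A_j)\,\overline{\Delta^{1-k}}\otimes\overline{\Delta^{j-k}}$. The rule $\Delta(m\otimes m')=\Delta(m)\otimes\Delta(m')$ from Definition~\ref{definition:deltatensor} gives the clean identity $\Delta(T^{(j)}_k)=T^{(j)}_{k-1}$, so the inner sums telescope and $\Delta(\Omega)-\Omega=\sum_{j=1}^{r}\bigl(T^{(j)}_{-1}-T^{(j)}_{j-1}\bigr)$. The second-slot factor of $\sum_j T^{(j)}_{-1}$ is $\sum_{j=1}^{r}\Delta(A_j)\overline{\Delta^{j+1}}$, which, by applying $\Delta$ to the defining relation $L\cdot\overline{1}_M=0$, equals $-\Delta(A_0)\overline{\Delta}$ in $M$. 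Symmetrically, the first-slot factor of $\sum_j T^{(j)}_{j-1}$ is $\sum_{j=1}^{r}\Delta^{1-j}(A_j)\overline{\Delta^{2-j}}$, which, by applying the endomorphism $\Delta^{2-r}$ of $N$ to the relation $L^*\cdot\overline{1}_N=0$, equals $-\Delta(A_0)\overline{\Delta^2}$. The two resulting boundary contributions cancel, and $\Delta(\Omega)=\Omega$.

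The last step, and the one I expect to be the main obstacle, is the non-degeneracy of $\Omega$. Writing $\Omega=\sum_{l=0}^{r-1}n_{l+1}\otimes\overline{\Delta^l}$ in the natural $K$-basis of $M$, I would show $n_1,\dots,n_r\in N$ are $K$-linearly independent by computing the coefficient matrix. For each fixed $l$, the contributions to $n_{l+1}$ collect the direct terms with $j-k=l$ together with the reduction of the overflow term $A_r\,\overline{\Delta}\otimes\overline{\Delta^r}$ via $L\cdot\overline{1}_M=0$; the latter cancels exactly the diagonal $A_l\overline{\Delta}$ piece, leaving essentially anti-triangular data with corner entries $n_1=-A_0\overline{\Delta}$ and $n_r=\Delta^{-1}(A_r)\overline{1}$. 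After further reducing any negative powers of $\Delta$ appearing in the first slot via the relations $L^*\cdot\overline{\Delta^{-m}}=0$, the determinant of the resulting $r\times r$ matrix is a nonzero monomial expression in $A_0,A_r$ and their $\Delta$-shifts, hence invertible under the hypotheses $A_0,A_r\ne 0$. With non-degeneracy established, Proposition~\ref{proposition:deltaisomorphism} yields the sought $\Delta$-isomorphism $M^*\cong K[\Delta,\Delta^{-1}]/(L^*)$.
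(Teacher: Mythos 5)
Your proposal is correct and follows essentially the same route as the paper: you exhibit an explicit $\Delta$-fixed tensor $\Omega\in N\otimes M$, verify $\Delta(\Omega)=\Omega$ by telescoping with the boundary terms killed by the relations $L\equiv 0$ in $M$ and $\Delta^{2-r}L^*\equiv 0$ in $N$, and get non-degeneracy from the anti-triangular coefficient matrix with determinant a monomial in $A_0,A_r$ and their shifts (the paper's matrix $\Psi_q$). Indeed, after reducing the overflow term $A_r\Delta\otimes\Delta^r$ modulo $(L)$ and the negative powers in the first slot modulo $(L^*)$, your $\Omega$ is exactly the negative of the paper's, so the two arguments coincide.
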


\begin{proof}Consider the element
\begin{multline*}
\Omega=-\Delta^{-1}(A_r)(1\otimes\Delta^{r-1})+A_{r-2}(\Delta\otimes\Delta^{r-2})\\+(1+\Delta)(A_{r-3}\Delta\otimes\Delta^{r-3})
+\dots+(1+\Delta+\cdots+\Delta^{r-2})(A_0\Delta\otimes1).
\end{multline*}
in $(K[\Delta,\Delta^{-1}]/(L^*))\otimes(K[\Delta,\Delta^{-1}]/(L))$.
Thanks to Proposition~\ref{proposition:deltaisomorphism},
it suffices to prove that $\Delta(\Omega)=\Omega$
and that $\Omega$ is non-degenerate. We compute
\begin{eqnarray*}
(1-\Delta)\Omega&=&-\Delta^{-1}(A_r)(1\otimes\Delta^{r-1})+A_r(\Delta\otimes\Delta^r)\\
&&+(1-\Delta)(A_{r-2}\Delta\otimes\Delta^{r-2})+(1-\Delta^2)(A_{r-3}\Delta\otimes\Delta^{r-3})\\
&&+\dots+(1-\Delta^{r-1})(A_0\Delta\otimes1)\\
&=&A_r\Delta\otimes\Delta^r+A_{r-2}\Delta\otimes
\Delta^{r-2}+\cdots+A_0\Delta\otimes1\\
&&-\Delta^{-1}(A_r)(1\otimes\Delta^{r-1})\\
&&-\Delta(A_{r-2})(\Delta^2\otimes\Delta^{r-1})-\cdots-(\Delta^{r-1}A_0)(\Delta^r\otimes\Delta^{r-1})\\
&=&\Delta\otimes L-L^*\otimes\Delta^{r-1}=0-0=0.
\end{eqnarray*}
Non-degeneracy of our $\Omega$ follows from the invertibility of the matrix $\Psi_q$ defined below, and is equivalent to $A_0,A_r\neq0$.
\end{proof}

\begin{remark}
Our choice of $L^*$ is by no means unique. As M.~van~der~Put pointed out to us,
$$\Delta\circ L^*\circ\Delta^{-1}=\Delta^r\circ A_0+\Delta^{r-1}\circ A_1+\cdots+\Delta\circ A_{r-1}+A_r$$
or rather,
$$\Delta^r\left(\Delta^{-r}\circ A_r+\Delta^{-(r-1)}\circ A_{r-1}+\cdots+\Delta^{-1}\circ A_1+A_0\right)$$
might be a more natural candidate. The subsequent arguments in this paper would also be simpler.
However, for some reason, this operator produces
a version of Theorem~\ref{qduality} with a matrix $(M_{kl}(q))_{0\leq k,l\leq r-1}$ having fewer zeros than the present matrix.
For this reason we have given preference to the more complicated $L^*$ defined above.
\end{remark}

We get the following immediate consequence.
\begin{corollary}\label{corollary:qomegafixed}
Let ${\mathcal K}$ be a $\Delta$-extension of $K$ and suppose that
$f,g\in {\mathcal K}$ satisfy the equations $L(f)=0$ and $L^*(g)=0$, where $L$ and $L^*$ are defined in Theorem~\ref{theorem:qdual}. Then
the element
\begin{eqnarray*}
\Omega(g,f)&:=&-\Delta^{-1}(A_r)g\Delta^{r-1}(f)\\
&&+A_{r-2}\Delta(g)\Delta^{r-2}(f)\\
&&+A_{r-3}\Delta(g)\Delta^{r-3}(f)+\Delta(A_{r-2})\Delta^2(g)\Delta^{r-2}(f)\\
&&\vdots\\
&&+A_0\Delta(g)f+\Delta(A_0)\Delta^2(g)\Delta(f)+\cdots+\Delta^{r-2}(A_0)\Delta^{r-1}(g)\Delta^{r-2}(f)
\end{eqnarray*}
belongs to $K_0$, the subfield of elements of ${\mathcal K}$ fixed under $\Delta$.
\end{corollary}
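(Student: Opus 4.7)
The plan is to obtain $\Omega(g,f)\in K_0$ as the image of the tensor $\Omega\in(K[\Delta,\Delta^{-1}]/(L^*))\otimes(K[\Delta,\Delta^{-1}]/(L))$ constructed in the proof of Theorem~\ref{theorem:qdual} under a $\Delta$-equivariant evaluation map into ${\mathcal K}$, and then to invoke the identity $\Delta(\Omega)=\Omega$ proven there.

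First I would check that the evaluation maps
$${\rm ev}_f:K[\Delta,\Delta^{-1}]/(L)\to{\mathcal K},\quad [P]\mapsto P(f),\qquad {\rm ev}_g:K[\Delta,\Delta^{-1}]/(L^*)\to{\mathcal K},\quad [Q]\mapsto Q(g),$$
are well defined (because $L(f)=0$ and $L^*(g)=0$) and commute with $\Delta$, since $(\Delta P)(f)=\Delta(P(f))$ and similarly for $g$. Composing their tensor product with the multiplication ${\mathcal K}\otimes_K{\mathcal K}\to{\mathcal K}$ yields a $K$-linear map
$$\varepsilon:(K[\Delta,\Delta^{-1}]/(L^*))\otimes(K[\Delta,\Delta^{-1}]/(L))\to{\mathcal K},\qquad [Q]\otimes[P]\mapsto Q(g)\,P(f).$$
With the tensor $\Delta$-structure $\Delta(m\otimes m')=\Delta(m)\otimes\Delta(m')$ from Definition~\ref{definition:deltatensor} and the multiplicativity of $\Delta$ on ${\mathcal K}$, the map $\varepsilon$ is a $\Delta$-module homomorphism. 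Therefore $\Delta(\Omega)=\Omega$ immediately forces $\Delta(\varepsilon(\Omega))=\varepsilon(\Omega)$, so $\varepsilon(\Omega)\in K_0$.

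It remains to verify that $\varepsilon(\Omega)$ equals the expression displayed in the statement. For this I would expand each summand $(1+\Delta+\cdots+\Delta^{r-2-i})(A_i\Delta\otimes\Delta^i)$ of $\Omega$ by repeatedly using the tensor $\Delta$-action together with the skew-ring relation $\Delta\cdot a=\Delta(a)\cdot\Delta$, which gives
$$\sum_{k=0}^{r-2-i}\Delta^k(A_i)\,\Delta^{k+1}\otimes\Delta^{i+k},$$
whose image under $\varepsilon$ is $\sum_{k=0}^{r-2-i}\Delta^k(A_i)\,\Delta^{k+1}(g)\,\Delta^{i+k}(f)$. Adding the contribution of the first summand $-\Delta^{-1}(A_r)(1\otimes\Delta^{r-1})\mapsto-\Delta^{-1}(A_r)\,g\,\Delta^{r-1}(f)$ reproduces exactly the displayed formula for $\Omega(g,f)$. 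The only real work is this bookkeeping of how shifts $\Delta^k(A_i)$ appear when powers of $\Delta$ pass through coefficients; the rest of the argument is a formal application of the preceding theory.
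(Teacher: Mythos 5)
Your proof is correct and is precisely the argument the paper intends: the corollary is presented as an immediate consequence of Theorem~\ref{theorem:qdual}, obtained by evaluating the $\Delta$-fixed tensor $\Omega$ through the $\Delta$-equivariant map $[Q]\otimes[P]\mapsto Q(g)\,P(f)$ and using that $\Delta$ is multiplicative on ${\mathcal K}$. Your bookkeeping of the summands $\sum_{k=0}^{r-2-i}\Delta^k(A_i)\,\Delta^{k+1}(g)\,\Delta^{i+k}(f)$ agrees with the matrix $\Psi_q$ displayed right after the corollary (and in fact corrects a small typo in the corollary's displayed formula, where $\Delta(A_{r-2})\Delta^2(g)\Delta^{r-2}(f)$ should read $\Delta(A_{r-3})\Delta^2(g)\Delta^{r-2}(f)$).
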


The element $\Omega(g,f)$ can be written in matrix form as follows:
$$(g,\Delta (g),\ldots,\Delta^{r-1}(g))
\left(\begin{matrix}0 & 0 & \cdots      & 0 &-\Delta^{-1}(A_r)\\
A_0 & A_1 & \cdots              &  A_{r-2} & 0\\
0 & \Delta (A_0) &   \cdots          & \Delta (A_{r-3}) & 0\\
\vdots &\vdots  & &\vdots &\vdots\\
0 & 0 &     \cdots                &  \Delta^{r-3}(A_1) & 0\\
0 & 0 & \cdots               & \Delta^{r-2}(A_0) & 0\end{matrix}\right)
\left(\begin{matrix}f\\ \Delta (f)\\ \vdots\\ \Delta^{r-1}(f)\end{matrix}\right).$$
Denote the middle $r\times r$-matrix, whose entries are in $K$, by $\Psi_q$.

Suppose we have a basis of solutions $f_1,\ldots,f_r$ of $L(f)=0$ and a basis
of solutions $g_1,\ldots,g_r$ of $L^*(g)=0$. Then, as $A_0,A_r\neq0$, the matrix $\Psi_q$ is invertible. Let us write $C_{ij}=\Omega(g_i,f_j)$
for all $i,j$ in $\{1,2,\ldots,r\}$ and denote the $r\times r$-matrix with these
entries by $C$. Letting $\bbbf:=W(f_1,\ldots,f_r)$ and $\bbbg:=W(g_1,\ldots,g_r)$, the previous matrix relation can be rewritten as
\begin{equation}\label{prematrixqduality}
\bbbg^t\Psi_q\bbbf=C.
\end{equation}
As a consequence of Lemma~\ref{lemma:wronskian} the matrices $\bbbf$ and $\bbbg$ are invertible and we get
\begin{equation}\label{matrixqduality}
\bbbf C^{-1}\bbbg^t=\Psi_q^{-1}.
\end{equation}
Notice that $\Psi_q^{-1}$ has zeros on and below the diagonal, except for the
element on place $r,1$. Moreover, by appropriately choosing the basis $g_1,\ldots,g_r$, we can assume that $C$ is the identity matrix, therefore yielding the following immediate consequence of~\eqref{matrixqduality}.
\begin{corollary}\label{qdiagonalidentity}
Let $f_1,\ldots,f_r$ be a basis of solutions of $L(f)=0$, where $L$ is defined in Theorem~\ref{theorem:qdual}. Then there
exists a solution basis $g_1,\ldots,g_r$ of $L^*(g)=0$ such that
$$\sum_{i=1}^r\Delta^k(g_i)\Delta^l(f_i)\in K$$
for all $k,l\ge0$. In particular we have $\sum_{i=1}^rg_i\Delta^l(f_i)=0$
for $l=0,1,\ldots,r-2$.
\end{corollary}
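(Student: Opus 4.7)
The plan is to read the corollary off the matrix identity~\eqref{matrixqduality} after normalising the basis of solutions of $L^*(g)=0$ so that the pairing matrix $C$ becomes the identity. First I would fix an arbitrary basis $(g_1,\ldots,g_r)$ of the $K_0$-solution space of $L^*$ and observe, via Corollary~\ref{corollary:qomegafixed}, that the matrix $C=(C_{ij})=(\Omega(g_i,f_j))$ has entries in $K_0$. The factorisation $\bbbg^t\Psi_q\bbbf=C$ coming from~\eqref{prematrixqduality} exhibits $C$ as a product of three invertible matrices: the two Wronskians by Lemma~\ref{lemma:wronskian} applied to the $K_0$-linearly independent families $(f_i)$ and $(g_i)$, and $\Psi_q$ because cofactor expansion along its sparse first row reduces $\det\Psi_q$ to that of an upper-triangular $(r-1)\times(r-1)$ minor with diagonal $A_0,\Delta(A_0),\ldots,\Delta^{r-2}(A_0)$, nonzero thanks to $A_0,A_r\neq0$. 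Hence $C$ is invertible over $K_0$, and replacing $(g_1,\ldots,g_r)$ by the basis with coordinate row $(g_1,\ldots,g_r)C^{-1}$ (a $K_0$-linear change of basis, hence still in $\ker L^*$) makes the new $C$ equal to the identity.

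Once $C=I$, the identity~\eqref{matrixqduality} reduces to $\bbbf\bbbg^t=\Psi_q^{-1}$. Reading off the $(l+1,k+1)$-entry on both sides gives $\sum_{i=1}^r\Delta^l(f_i)\Delta^k(g_i)=(\Psi_q^{-1})_{l+1,k+1}$, and the right-hand side lies in $K$ because the entries of $\Psi_q$ do. This establishes the $K$-integrality for all $0\le k,l\le r-1$. The ``in particular'' statement is the specialisation to $k=0$: the description of $\Psi_q^{-1}$ recalled just before the corollary (zero entries on and below the diagonal except at position $(r,1)$) gives $(\Psi_q^{-1})_{l+1,1}=0$ exactly for $l=0,1,\ldots,r-2$.

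To extend the $K$-integrality from $k,l\in\{0,\ldots,r-1\}$ to all $k,l\ge0$, I would invoke the difference equations satisfied by the $f_i$ and $g_i$ themselves. The equation $L(f_i)=0$ expresses $\Delta^r(f_i)$ as a $K$-linear combination of $f_i,\Delta(f_i),\ldots,\Delta^{r-1}(f_i)$; iterating shows the same for every $\Delta^m(f_i)$ with $m\ge r$, and analogously for $g_i$ using $L^*$. Substituting these expansions into $\sum_i\Delta^k(g_i)\Delta^l(f_i)$ rewrites it as a $K$-linear combination of the finitely many sums already handled, keeping the value in $K$. The main (small) obstacle is the normalisation step; once the invertibility of $\Psi_q$, $\bbbf$ and $\bbbg$ is in hand, the rest of the argument is bookkeeping.
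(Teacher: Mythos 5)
Your proof is correct and follows essentially the same route as the paper: the corollary is read off from \eqref{matrixqduality} after normalising the dual basis so that $C$ becomes the identity, with the invertibility of $\Psi_q$, $\bbbf$ and $\bbbg$ justified exactly as you describe, and the extension to all $k,l\ge0$ via the recurrences $L(f_i)=0$, $L^*(g_i)=0$ is a detail the paper leaves implicit. (One cosmetic slip: since the new pairing matrix is $D^t C$ when $\tilde g_j=\sum_i g_i d_{ij}$, the change of basis should use $D=(C^t)^{-1}$ rather than $C^{-1}$; this is immaterial here.)
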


\section{Basic hypergeometric equation of order $r$}\label{qhypergeometric}

As an application of the previous section,  consider $\Delta=\sigma_q$ and the basic hypergeometric equation~\eqref{eq:qhyperg}, which we rewrite below:
$$(1-b_1\Delta/q)\cdots(1-b_{r}\Delta/q)f=z(1-a_1\Delta)\cdots(1-a_r\Delta)f.$$
By Theorem~\ref{theorem:qdual}, the dual equation reads
$$(\Delta-b_1/q)\cdots(\Delta-b_{r}/q)g=(\Delta-a_1)\cdots(\Delta-a_r)(z/q)g.$$
After rearranging factors we obtain
$$(1-q\Delta/b_1)\cdots(1-q\Delta/b_{r})g=\frac{a_1\cdots a_r}{b_1\cdots b_{r-1}}q^{r-2}z
(1-q\Delta/a_1)\cdots(1-q\Delta/a_r)g.$$
Note that this way of writing the dual general hypergeometric equation  was already  given in~\cite[Proposition~5]{Ro11}.
So the dual equation is again a hypergeometric equation with parameters
$a'_i=q/a_i,b'_i=q^2/b_i$ for $i=1,\ldots,r$ and $a_1\cdots a_rq^{r-2}z/(b_1\cdots b_{r-1})$
instead of $z$.

Suppose that none of the ratios $b_i/b_j$ with $i\ne j$ is an integer power of $q$.
Recall from the introduction that a basis of solutions of~\eqref{eq:qhyperg} reads
$$f_i(q;z)=z^{1-\beta_i}\,{}_r\phi_{r-1}\!\left[\begin{matrix}qa_1/b_i,\ldots,qa_r/b_i\\qb_1/b_i,\ldots,\vee,
\ldots,qb_r/b_i\end{matrix};q,z\right],\qquad 1\leq i\leq r.$$
Therefore a basis of solutions for the dual equation is given by
$$g_i(q;z)=z^{\beta_i-1}\,{}_r\phi_{r-1}\!\left[\begin{matrix}b_i/a_1,\dots,b_i/a_r\\qb_i/b_1,\dots,\lor,\dots,qb_i/b_r\end{matrix};q;\frac{a_1\dots a_rzq^{r-2}}{b_1\dots b_{r-1}}\right],$$
 as was defined in the introduction.

In a formal setup the ground field is now the field of rational functions $H_q(z)$ where $H_q$ is
the field $\bbbq$ extended with $q$ and the $a_i,b_j$. For the field ${\mathcal K}$ containing the solutions of the
difference equation we can take the field $H_q((z))$ of Laurent series with coefficients in  $H_q$ extended with the functions $z^{1-\beta_i}$,
which are defined as non-trivial solution of the difference equation $\Delta(f)=(q/b_i) f$. So loosely
speaking, $\beta_i=\log(b_i)/\log(q)$.

\begin{proposition}\label{dualpairingheine}
Let $\Psi_q$ and $C$ be the $r\times r$-matrices defined in Section~\ref{deltamodules},
but for the case of the q-hypergeometric equation of order $r$. For the solution bases of the equation
and its dual we take the bases just defined. Then $C$ is a diagonal matrix. The $i$-th diagonal
entry reads
$$C_{ii}={1\over qb_i^{r-2}}\prod_{j=1\atop j\neq i}^r(b_i-b_j),$$
where $b_r=q$.
\end{proposition}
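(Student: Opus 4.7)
The plan is to mirror the proof of Proposition~\ref{dualpairinggauss}. Corollary~\ref{corollary:qomegafixed} already guarantees that each entry $C_{ij}=\Omega(g_i,f_j)$ lies in the constant subfield $K_0=H_q$, so the task reduces to identifying these specific scalars. Only two ingredients are needed: the behaviour of $f_i,g_i$ near $z=0$, and the value at $z=0$ of the coefficients $A_s$ of the $q$-hypergeometric operator.

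For the off-diagonal vanishing, I would write $f_j(q;z)=z^{1-\beta_j}F_j(z)$ and $g_i(q;z)=z^{\beta_i-1}G_i(\tilde z)$, where $F_j,G_i\in H_q[[z]]$ have constant term $1$ and $\tilde z=a_1\cdots a_r q^{r-2}z/(b_1\cdots b_{r-1})$. Since $\Delta(z^{\beta_i-1})=(b_i/q)z^{\beta_i-1}$ and $\Delta(z^{1-\beta_j})=(q/b_j)z^{1-\beta_j}$, each term $\Delta^k(g_i)\Delta^l(f_j)$ carries a prefactor $z^{\beta_i-\beta_j}$ times an element of $H_q[[z]]$, so $\Omega(g_i,f_j)=z^{\beta_i-\beta_j}h_{ij}(z)$ for some $h_{ij}\in H_q[[z]]$. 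The hypothesis $b_i/b_j\notin q^{\bbbz}$ for $i\neq j$ forces $\beta_i-\beta_j\notin\bbbz$, so $z^{\beta_i-\beta_j}$ is linearly independent over $H_q[[z]]$ from the constants, and since $C_{ij}\in H_q$ we conclude $C_{ij}=0$.

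For the diagonal entry the factors $z^{\beta_i-1}$ and $z^{1-\beta_i}$ cancel, and $C_{ii}$ equals the value at $z=0$ of what remains. Using $F_i(0)=G_i(0)=1$ together with $\Delta^m(A_s)(0)=A_s(0)=(-1)^s e_s(b_1,\ldots,b_r)/q^s$, the bookkeeping of $\Psi_q$ shows that each index $s\in\{0,\ldots,r-2\}$ contributes with multiplicity $r-1-s$, coming from the length of the operator $1+\Delta+\cdots+\Delta^{r-2-s}$ sitting in front of $A_s\Delta$, while the prefactors $(b_i/q)^k(q/b_i)^l$ combine to $(q/b_i)^{l-k}$. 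I would then obtain
$$qb_i^{r-2}C_{ii}=\sum_{s=0}^{r-2}(r-1-s)(-1)^s e_s(b_1,\ldots,b_r)\,b_i^{r-1-s}+(-1)^{r+1}\frac{e_r(b_1,\ldots,b_r)}{b_i}.$$
Observing that the last summand is precisely the $s=r$ contribution of the same formula (coefficient $r-1-r=-1$) and that the $s=r-1$ term vanishes, the right-hand side collapses to $\sum_{s=0}^{r}(r-1-s)(-1)^s e_s(b_1,\ldots,b_r)\,b_i^{r-1-s}$.

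The last step is to recognise this as $P'(b_i)$, where $P(x):=\prod_{j=1}^r(x-b_j)=\sum_s(-1)^s e_s(b_1,\ldots,b_r)\,x^{r-s}$. Since $xP'(x)=\sum_s(r-s)(-1)^se_s(b_1,\ldots,b_r)x^{r-s}$, the sum equals $b_i^{-1}\bigl[(r-1)P(b_i)-(rP(b_i)-b_iP'(b_i))\bigr]=P'(b_i)$, using $P(b_i)=0$; and $P'(b_i)=\prod_{j\neq i}(b_i-b_j)$ gives exactly the stated formula. The main obstacle is to align the rows and columns of $\Psi_q$ with the powers of $\Delta$ acting on $f_i,g_i$; once this is done, the observation that the isolated $-\Delta^{-1}(A_r)g_i\Delta^{r-1}(f_i)$ term supplies the missing $s=r$ summand makes the polynomial simplification essentially automatic.
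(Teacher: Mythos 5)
Your proposal is correct and takes essentially the same approach as the paper: the off-diagonal entries vanish because a $\Delta$-constant of the form $z^{\beta_i-\beta_j}$ times a power series must be zero, and the diagonal entry is computed by evaluating the pairing at $z=0$ and recognizing the resulting sum as $P'(b_i)$ for $P(x)=\prod_{j}(x-b_j)$, using $P(b_i)=0$. The paper organizes the $z=0$ evaluation as an explicit vector--matrix--vector product with the constant terms $B_s$ and works with $\prod_j(x-b_j/q)$ instead, but your term-by-term tally of the multiplicities $r-1-s$ from $\Psi_q$ is the same computation.
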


\begin{proof} Let $H_q$ be the field generated over $\bbbq$ by the $a_i$'s, the $b_j$'s and $q$.
This is contained in the field $K_0$ of
constants. The elements of $C$ belong to $H_q$. In particular, $C_{ij}\in H_q$ for all $i\ne j$.
However, $C_{ij}$ is also of the form $z^{-\beta_i+\beta_j}$ times a power series in $z$.
Since $\beta_i\ne\beta_j$ if $i\ne j$ we can only conclude that $C_{ij}=0$ for all
distinct $i,j$. For the computation of $C_{ii}$ it suffices to look only at
constant terms after cancellation of the powers $z^{1-\beta_i}$ and $z^{\beta_i-1}$
in the evaluation of equation~\eqref{prematrixqduality}.
This means $C_{ii}$ equals
$$\left(1,b_i/q,(b_i/q)^2,\ldots,(b_i/q)^{r-1}\right)
\left(\begin{matrix}0 & 0 & \cdots     & 0 &-B_r\\
B_0 & B_1 &  \cdots              & B_{r-2} & 0\\
0 & B_0 &    \cdots          &B_{r-3} & 0\\
\vdots &\vdots & &\vdots  &\vdots\\
0 & 0 &     \cdots                 &  B_1 & 0\\
0 & 0 & \cdots              & B_0 & 0\end{matrix}\right)
\left(\begin{matrix}1\\ q/b_i\\ (q/b_i)^2\\ \vdots\\ (q/b_i)^{r-1}\end{matrix}\right),
$$
where the $B_j$'s are the constants in the coefficients of the q-hypergeometric
operator. That is,
$$B_0+B_1\Delta+\cdots+B_r\Delta^r=(1-b_i\Delta/q)\cdots(1-b_{r}\Delta/q).$$
Rewrite this as
$$B_0x^r+B_1x^{r-1}+\cdots+B_r=(x-b_1/q)(x-b_2/q)\cdots(x-b_r/q),$$
where $b_r=q$ and $B_0=1$. Note that this implies for any $i$ that
$$B_0(b_i/q)^r+B_1(b_i/q)^{r-1}+\cdots+B_{r-1}(b_i/q)+B_r=0,$$
and after differentiation of the polynomial,
$$rB_0(b_i/q)^r+(r-1)B_1(b_i/q)^{r-1}+\cdots+B_{r-1}=b_i(b_i-b_1)\cdots\vee\cdots(b_i-b_r)/q^r.$$
Subtraction of the first identity yields
\begin{multline*}
(r-1)B_0(b_i/q)^r+(r-2)B_1(b_i/q)^{r-1}\\
+\cdots+B_{r-2}(b_i/q)^2-B_r
=
b_i(b_i-b_1)\cdots\vee\cdots(b_i-b_r)/q^r.
\end{multline*}
But evaluation of $C_{ii}$ gives
$$C_{ii}=(q/b_i)^{r-1}(-B_r+(r-1)B_0(b_i/q)^r+\dots+B_{r-2}(b_i/q)^2).$$
In view of the previous calculation this implies our assertion.
\end{proof}

As an immediate consequence of Corollary~\ref{qdiagonalidentity} and Proposition~\ref{dualpairingheine}, we get the following result.
\begin{corollary}\label{finalheine}
Let $(f_i)_{1\leq i\leq r}$ and $(g_j)_{1\leq j\leq r}$ be the basis of solutions of the q-hypergeometric equation and the
dual equation, as given above. Let $H_q$ be the field generated over $\bbbq$ by the $a_i,b_j$ and $q$. Then, with
$C_{ii}$ as in the previous theorem we get
$$\sum_{i=1}^r{1\over C_{ii}}\Delta^k(f_i)\Delta^l(g_i)\in H_q(z),$$
for all integers $k,l\ge0$. In particular we have $\sum_{i=1}^r{1\over C_{ii}}\Delta^k(f_i)g_i=0$
for $k=0,1,\ldots,r-2$.
\end{corollary}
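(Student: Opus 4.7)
My plan is to derive the statement as an almost immediate consequence of the general $\Delta$-module machinery of Section~\ref{deltamodules}, specifically the matrix identity~\eqref{matrixqduality}, together with the explicit computation of the pairing matrix carried out in Proposition~\ref{dualpairingheine}. The approach parallels the derivation of Corollary~\ref{finalgauss} in the differential setting, the only new ingredient being the different shape of $C$ and of $\Psi_q^{-1}$.

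The first step is to invoke Proposition~\ref{dualpairingheine} and record that, for the bases $(f_i)$ of $L(f)=0$ and $(g_i)$ of $L^*(g)=0$ arising from the $q$-hypergeometric equation and its dual, the pairing matrix $C=\bigl(\Omega(g_i,f_j)\bigr)_{1\le i,j\le r}$ is diagonal with entries $C_{ii}=q^{-1}b_i^{-(r-2)}\prod_{j\neq i}(b_i-b_j)$. In particular $C$ is invertible and $C^{-1}$ is the diagonal matrix with entries $1/C_{ii}$. Substituting this into the identity $\bbbf\,C^{-1}\,\bbbg^t=\Psi_q^{-1}$ from~\eqref{matrixqduality} and reading off the $(k+1,l+1)$-entry, with the Wronskian rows and columns indexed by derivative order $0\le k,l\le r-1$, simplifies the left-hand side to
$$\sum_{i=1}^r\frac{1}{C_{ii}}\,\Delta^k(f_i)\,\Delta^l(g_i),$$
while the right-hand side is $(\Psi_q^{-1})_{k+1,l+1}$, which lies in $K=H_q(z)$ because the entries of $\Psi_q$ are all rational in $z$ with coefficients in $H_q$. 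This yields the first assertion.

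For the ``in particular'' claim I will appeal to the structural observation recorded just before Corollary~\ref{qdiagonalidentity}, namely that $\Psi_q^{-1}$ has zeros on and below its main diagonal, with the single exception of the $(r,1)$-entry. Specialising to $l=0$ and $k\in\{0,1,\ldots,r-2\}$ picks out entries in the first column of $\Psi_q^{-1}$ strictly above the bottom row, all of which vanish, giving exactly the stated identity $\sum_{i=1}^r C_{ii}^{-1}\Delta^k(f_i)\,g_i=0$. The only point requiring any scrutiny --- and the mildest potential obstacle --- is to verify the zero pattern of $\Psi_q^{-1}$ from the staircase shape of $\Psi_q$ (non-zero only at position $(1,r)$ and along the shifted diagonal carrying the $\Delta$-shifts of $A_0,\ldots,A_{r-2}$); but this is a routine linear algebra check that is already visible in the explicit $r=2$ and $r=3$ examples displayed after Theorem~\ref{qduality}, and no further computation specific to the $q$-hypergeometric operator is needed.
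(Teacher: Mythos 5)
Your proof is correct and follows essentially the same route as the paper: the paper obtains the corollary by combining equation~\eqref{matrixqduality} (via Corollary~\ref{qdiagonalidentity}) with the diagonal form of $C$ from Proposition~\ref{dualpairingheine} and the zero pattern of $\Psi_q^{-1}$, exactly the ingredients you use, the only cosmetic difference being that you keep the explicit diagonal $C^{-1}$ instead of first rescaling the $g_i$ to make $C$ the identity. (Like the paper, you implicitly restrict to $0\le k,l\le r-1$; the extension to all $k,l\ge0$ uses the recurrences $L(f_i)=0$, $L^*(g_i)=0$ with $A_0,A_r\neq0$ to reduce higher shifts to the Wronskian rows, which is routine and treated the same way in the text.)
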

Finally, to prove the cases $k=r-1,l=0$ and $l=k+1$ of Theorem~\ref{qduality}, it remains to explicitly invert the matrix $\Psi_q$, and to use~\eqref{matrixqduality} and Proposition~\ref{dualpairingheine}.

\ifx
Directly from the definition of $\Psi$, we have
\begin{equation*}\label{inverse}
\Psi^{-1}=\left(
\begin{matrix}
0&x_0&x_1&\dots&x_{r-2}\\
&&&&\\
0&0&\Delta x_0&\dots&\Delta x_{r-3}\\
\vdots&\vdots&\vdots&&\vdots\\
0&0&0&\dots&\Delta^{r-2} x_{0}\\
&&&&\\
-\Delta^{-1}(1/A_r)&0&0&\dots&0
\end{matrix}\right),\end{equation*}
where $x_0=1/A_0=1/(1-z)$ and for all $r\geq3$, $\sum_{j=0}^{r-2}x_j\Delta^j(A_{r-2-j})=0$. As $\Delta^j(A_{r-2-j})\in h(z)$, this gives the desired recursive definition of the $x_j$'s.\\

Thanks to the previous expression of $\Psi^{-1}$, note that there are only $2r-1$ different duality relations encoded in~\eqref{matrixduality}, all other $r^2-2r+1$ identities being consequences of these ones.
\fi


\begin{thebibliography}{99}
\bibitem{Ba33} W.\ N.\ Bailey, {\em On Certain Relations between Hypergeometric Series of Higher Order},
     J. London Math. Soc. {\bf 8} (1933), 100--107.

\bibitem{BH89} F.\ Beukers and G.\ Heckman, {\em Monodromy for the hypergeometric function
$\sb nF\sb {n-1}$},  Invent. Math. {\bf 95} (1989), 325--354.

\bibitem{Da32} H.\ B.\ C.\ Darling, {\em On Certain Relations between Hypergeometric Series of Higher Orders},
    Proc. London Math. Soc. {\bf 34} (1932), 323--339.

\bibitem{Ga1866} C.\ F.\ Gauss, {\em Gesammelte Werke, Bd III} (1866).

\bibitem{GR} G.\ Gasper and M.\ Rahman, {\em Basic Hypergeometric Series}, Encyclopedia of Mathematics And Its Applications 35, Cambridge University Press, Cambridge, 1990.

\bibitem{He} E.\ Heine, {\em \"Uber die reihe $1+ \frac{(q^\alpha-1)(q^\beta-1)}{(q-1)(q^\gamma-1)}x+\cdots$}, J. Reine Angew. Math. {\bf 32} (1846), 210--212.

\bibitem{Ja} F.\ H.\ Jackson, {\em $q$-Difference equations}, Amer. J. Math.\ {\bf 32} (1910), 305--314.

\bibitem{Ro11} J.\ Roques, {\em Generalized basic hypergeometric equations}, Invent. Math.\ {\bf 184} (2011),  499--528.

\bibitem{Ro14} J.\ Roques, {\em Birkhoff matrices, residues and rigidity for $q$-difference equations}, preprint, 34 pages, to appear in J. Reine Angew. Math.

\bibitem{Se51} D.\ B.\ Sears, {\em Transformations of basic hypergeometric functions of any order},
Proc. London Math. Soc. {\bf 53} (1951), 181--191.

\bibitem{Sl} L.\ J.\ Slater, {\em Generalized hypergeometric functions}, Cambridge University Press, London/New York, 1966.

\bibitem{Sh57} H.\ Shukla, {\em On Certain Relations between products of bilateral Hypergeometric Series},
     Proc. Glasgow Math. Assoc. {\bf 3} (1957), 141--144.
     
\bibitem{Th1870} J.\ Thomae, {\em \"Ueber die h\"oheren hypergeometrischen Reihen, insbesondere
\"uber die Reihe:\\
$1+{a_0a_1a_2\over 1\cdot b_1b_2}x+{a_0(a_0+1)a_1(a_1+1)a_2(a_2+1)
\over 1\cdot 2\cdot b_1(b_1+1)b_2(b_2+1)}x^2+\cdots$}, Math. Annalen {\bf 2} (1870), 427--441.
\end{thebibliography}
\end{document}